\documentclass{article}
\usepackage[utf8]{inputenc}
\usepackage{amsmath}
\usepackage{amssymb}
\usepackage{amsthm}
\usepackage{graphicx}
\usepackage[margin=1.4cm]{geometry}

\newtheorem{thm}{Theorem}

\newtheorem{prop}{Proposition}
\newtheorem{lem}{Lemma}
\newtheorem{rmk}{Remark}

\title{$SU(2)$-invariant steady gradient Ricci solitons on four-manifolds}
\author{Timothy Buttsworth}
\date{}

\begin{document}
\maketitle
\abstract{Using center manifolds and topological degree theory, we construct a new family of complete, $SU(2)$-invariant and steady gradient Ricci solitons on the four-dimensional non-compact cohomogeneity one manifold with group diagram $\mathbb{Z}_4\subset U(1)\subset SU(2)$. We also provide simpler constructions of the existing $U(2)$-invariant steady and complete gradient solitons on the cohomogeneity one manifolds with group diagrams $\mathbb{Z}_n\subset U(1)\subset SU(2)$ for any $n\in \mathbb{N}$, including Appleton's non-collapsed solitons for $n\ge 3$.}
\section{Introduction}
A Riemannian manifold $(M,g)$ is called a \textit{Ricci soliton} if there is a vector field $X$ so that 
\begin{align*}
Ric(g)+\mathcal{L}_X g=\lambda g
\end{align*}
for some $\lambda\in \mathbb{R}$. 
A Ricci soliton generates a solution of the Ricci flow which evolves only via scalings and pullbacks of the time-varying diffeomorphism generated by the vector field $X$.
If $\lambda$ is positive, zero, or negative, we say the soliton is \textit{shrinking}, \textit{steady} or \textit{expanding}, respectively. If the vector field $X$ appears as the gradient of some scalar potential function $u:M\to \mathbb{R}$, we say that the soliton is \textit{gradient}. 

A classical example of a gradient steady Ricci soliton that is \textit{not} Ricci-flat is the \textit{Bryant soliton} on $\mathbb{R}^3$. This soliton and its higher-dimensional analogues are all \textit{rotationally-invariant}, i.e., the geometry is foliated by round spheres of dimension one less than that of the ambient manifold. These Bryant solitons are thus examples of \textit{cohomogeneity one} Riemannian manifolds, i.e., their geometries are foliated by homogeneous Riemann spaces of dimension one less than that of the ambient manifold. The search for steady solitons amongst the class of cohomogeneity one Riemannian manifolds has been quite fruitful (see, for example, the solitons constructed in \cite{Dancer13}). 

For four-dimensional manifolds, a particularly popular avenue of study is the class of Ricci solitons that are foliated by a one-parameter family of homogeneous Berger sphere geometries on $\mathbb{S}^3/\mathbb{Z}_n$, and can be smoothly completed by including an $\mathbb{S}^2$ `bolt' in the topology; since there is only one singular orbit, the resulting four-dimensional manifold $M$ is non-compact. We will carefully describe these manifolds in the next section. 
When $n=1,2$, we have the well-known Ricci-flat Taub-Bolt and Eguchi-Hanson metrics (described in, for example, \cite{TaubBolt} and \cite{Eguchi}, respectively).  For any $n\in \mathbb{N}$, there is also a one-parameter family of steady gradient solitons that are \textit{not} Ricci flat.  In fact, for $n\ge 3$, Appleton showed \cite{Appleton} that taking a limit of these solitons gives rise to a complete steady Ricci soliton that is not Ricci-flat \textit{and non-collapsed}; this construction breaks down for $n=1,2$ precisely because of the complete Ricci-flat metrics that are available.

Since the foliation of these solitons is by Berger spheres, all of these examples are invariant under a certain action of $U(2)$  which acts transitively on the $\mathbb{S}^3/\mathbb{Z}_n$ fibers. However, it is also true that the Lie subgroup $SU(2)\subset U(2)$ acts transitively on these same fibers; the aim of this paper is to examine the possible Ricci solitons that can arise under the weaker assumption of $SU(2)$ invariance instead of $U(2)$ invariance. In this paper, we provide simpler constructions of the well-known $U(2)$-invariant solitons, and use these to subsequently construct solitons that are \textit{not} $U(2)$-invariant. 
\begin{thm}\label{mainexistencetheorem}
For $n=4$, there exists a one-parameter family of $SU(2)$-invariant steady gradient Ricci solitons on $M$ that are \textit{not} $U(2)$-invariant.
\end{thm}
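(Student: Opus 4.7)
The plan is to convert the $SU(2)$-invariant steady gradient Ricci soliton equation into an autonomous ODE system, construct a finite-dimensional family of smooth local solutions at the $\mathbb{S}^2$ bolt via a center manifold argument, and then use the already-constructed $U(2)$-invariant solitons as a reference to prove, via topological degree, that a transverse one-parameter slice of those local solutions extends to complete metrics on all of $M$.

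First, I would parametrize an $SU(2)$-invariant metric on the principal part of $M$ as $g=dt^2+a(t)^2\sigma_1^2+b(t)^2\sigma_2^2+c(t)^2\sigma_3^2$, where $\{\sigma_i\}$ is a standard left-invariant coframe on $SU(2)$. Assuming the potential $u$ depends only on $t$, the soliton equation $\Ric(g)+\nabla^2u=0$ reduces to an autonomous first-order ODE system in $(a,b,c,u')$. The $U(2)$-invariant sector is exactly the invariant submanifold $\{a\equiv b\}$, on which the ODE collapses onto the one already treated earlier in the paper to recover the Appleton-type family.

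The next step is local analysis at the bolt. Smoothness for the group diagram $\mathbb{Z}_4\subset U(1)\subset SU(2)$ forces $a(0)=b(0)>0$, $c(0)=0$, $c'(0)=\tfrac14$, together with a hierarchy of higher-order Taylor conditions reflecting the weight of the slice representation of $U(1)$. After a desingularising change of variables (for instance, logarithmic coordinates for $a,b,c$ and a rescaled time so that the bolt becomes a finite critical point), $t=0$ becomes a non-hyperbolic equilibrium, and the center manifold theorem produces a smooth finite-dimensional family of local solutions. The feature specific to $n=4$ is that the smoothness conditions admit an extra one-parameter deformation transverse to $\{a\equiv b\}$: the eigenvalue of the linearisation in the non-axisymmetric direction $a-b$ is resonant in precisely the way that the $\mathbb{Z}_4$ isotropy allows a smooth extension, so the center manifold is strictly larger than its $U(2)$-invariant restriction by exactly one dimension.

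To globalise, I would define a continuous shooting map $\Phi$ from a compact two-dimensional disk in the center manifold (parametrised by one coordinate in the $U(2)$-invariant family and one non-axisymmetric coordinate) into a suitable asymptotic target space, engineered so that complete steady solitons correspond to zeros of $\Phi$. The restriction $\Phi\vert_{\{a\equiv b\}}$ has an explicitly computable, nontrivial Brouwer degree inherited from the earlier $U(2)$-invariant construction, and homotopy invariance then forces zeros to persist off the invariant slice, yielding the desired one-parameter family of complete, $SU(2)$-invariant but not $U(2)$-invariant steady gradient solitons. The main obstacle will be the a priori control needed to keep $\Phi$ well-defined on a large enough domain off $\{a\equiv b\}$: one must produce monotone quantities of Dancer--Wang type that remain robust under non-axisymmetric perturbation, in order to rule out finite-time blow-up of $a,b,u'$ or vanishing of $c$, and to ensure that non-axisymmetric solutions do not merge back into the $U(2)$-invariant slice before $t=\infty$. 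Verifying the transversality required by the degree computation to sufficient order is where I expect the bulk of the technical work to concentrate.
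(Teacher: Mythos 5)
Your overall architecture --- reduce to an autonomous ODE system, produce a finite-dimensional family of smooth local solutions at the bolt with one extra parameter available precisely for $n=4$, and then run a degree/homotopy argument off the $U(2)$-invariant slice using the Appleton family as the reference --- is the same as the paper's. However, there is a genuine gap at the point you yourself flag as the crux, and you have also placed the center manifold machinery in the wrong location. The paper does \emph{not} use a center manifold at the bolt: the local solutions at $r=0$ are produced by a Fuchsian-type singular initial value problem (Theorem \ref{shorttimen4}), where the extra parameter $\gamma=\lim_{r\to 0}\tfrac{L_2-L_3}{2}$ arises from the indicial roots of the singular linear part together with the $\mathbb{Z}_4$ smoothness conditions, yielding a $2$-sphere of data $(\alpha,\beta,\gamma)$ rather than the circle available for other $n$. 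The center manifold enters at the \emph{other} end: the $\omega$-limit of Appleton's soliton is the origin of the six-dimensional $(X_1,X_2,X_3,Y_1,Y_2,Y_3)$ system, which is a non-hyperbolic equilibrium with a three-dimensional center manifold $\mathcal{C}$, and it is the dynamics there that control completeness.

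The missing content is the construction of your ``suitable asymptotic target space.'' The paper's Theorem \ref{descriptioncenter} is the technical core: it shows that trajectories staying in a small ball are exponentially shadowed by center-manifold trajectories via a continuous map $h$, that on $\mathcal{C}$ the sign of $E(z)=\max\{y_2-y_1,y_3-y_1\}$ decides whether the trajectory converges (to a point with two $Y$'s equal and the third zero, hence a complete soliton with $\mathcal{L}\to l>0$) or not, and hence that the scalar function $F=E\circ h\circ G$ has zeros exactly at complete solitons. Without this, you cannot define $\Phi$, cannot prove that its zeros are complete metrics, and cannot evaluate the boundary signs needed for the degree computation (the paper shows $F>0$ at the complete $U(2)$-invariant endpoint and derives a contradiction from $F\ge 0$ at the incomplete endpoint). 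Relatedly, your proposed route to a priori control --- monotone quantities of Dancer--Wang type robust under non-axisymmetric perturbation --- is not how the paper proceeds and is likely not available off the invariant slice; instead, long-time existence and convergence of the perturbed, non-axisymmetric solutions come for free from the attractivity and shadowing properties of $\mathcal{C}$ (points 4--6 of Theorem \ref{descriptioncenter}), combined with the uniform estimates of Proposition \ref{uniformconvergenceu2} which guarantee that the data at a fixed time $T_0$ lands in the small ball where that machinery applies. Your worry about non-axisymmetric solutions ``merging back'' into the invariant slice is not an issue that needs to be addressed.
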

The new solitons are found by perturbing Appleton's non-collapsed soliton in the `non $U(2)$-invariant' direction. Our perturbative tool is topological degree theory, rather than the inverse function theorem, and the complications arising from the non-compactness of the manifold are primarily dealt with through the theory of center manifolds for dynamical systems.
\section*{Acknowledgements}
I am grateful to Ramiro Lafuente, Artem Pulemotov and Wolfgang Ziller for comments on Sections 1 and 2.
\section{Preliminaries}
In this section, we more carefully examine the four-dimensional manifolds on which we study the Ricci soliton equation. We also look at the possible structure that Ricci solitons can have on the principal part of the manifold, and state the conditions we need to impose in order to ensure that the metric  can be smoothly extended to the $\mathbb{S}^2$ singular orbit. 
\subsection{Algebra and topology}
Consider the Lie group 
\begin{align*}
SU(2)=\Bigg\{\begin{pmatrix}
\alpha&-\overline{\beta}\\
\beta&\overline{\alpha}
\end{pmatrix} \vert \alpha,\beta\in \mathbb{C}, \left|\alpha\right|^2+\left|\beta\right|^2=1 \Bigg\},
\end{align*}
and the closed Lie subgroups
\begin{align*}
\mathbb{Z}_n=\Bigg\{\begin{pmatrix}
e^{\frac{2\pi i k}{n}}&0\\
0&e^{-\frac{2\pi i k}{n}}
\end{pmatrix} \ \vert \  k=0,\cdots,n-1\Bigg\}\subset U(1)=\Bigg\{\begin{pmatrix}
e^{i\theta}&0\\
0&e^{-i \theta}
\end{pmatrix} \ \vert \  \theta\in [0,2\pi)\Bigg\}\subset SU(2).
\end{align*}
We have that 
$U(1)/\mathbb{Z}_n=\mathbb{S}^1$; this $U(1)$-action extends to a cohomogeneity one action on $\mathbb{R}^2$ which preserves and acts transitively on circles centered at the origin. Also note that $SU(2)/U(1)=\mathbb{S}^2$. 
The four-dimensional manifolds we will consider are given by $$M=SU(2)\times_{U(1)} \mathbb{R}^2,$$ where $(g,x)\sim (g k^{-1},kx)$ for all $(g,x,k)\in SU(2)\times \mathbb{R}^2\times U(1)$. The action of $SU(2)$ on $M$ is cohomogeneity one, with principal isotropy of $\mathbb{Z}_n$, and a unique singular orbit with isotropy $U(1)$ occuring where $x=0$. Topologically, these manifolds appear as $(0,\infty)\times SU(2)/ \mathbb{Z}_n$, completed with a copy of $SU(2)/U(1)=\mathbb{S}^2$ (a `bolt') at the origin. 

We find it handy to note that the Lie algebra $\mathfrak{su}(2)$ of $SU(2)$ is given by the real span of the three vectors 
\begin{align*}
u_1=\begin{pmatrix}
i&0\\
0&-i
\end{pmatrix}, \qquad u_2=\begin{pmatrix}
0&i\\
i&0
\end{pmatrix}, \qquad u_3=\begin{pmatrix}0&-1\\
1&0\end{pmatrix}
\end{align*}
which satisfy the Lie bracket relations 
\begin{align*}
[u_1,u_2]=2u_3, \qquad [u_2,u_3]=2u_1, \qquad [u_3,u_1]=2u_2. 
\end{align*}
Thus, for an element $u_{\theta}=\begin{pmatrix}e^{i\theta}&0\\
0&e^{-i \theta}\end{pmatrix}\in U(1)$, we have 
\begin{align}\label{u1adjointaction}
Ad_{u_{\theta}}(u_1)=u_1, \qquad Ad_{u_{\theta}}(\cos(\psi)u_2+\sin(\psi)u_3)=\cos(\psi+2\theta)u_2+\sin(\psi+2\theta)u_3,
\end{align}
and $\exp(\theta u_1)=u_{\theta}$. 

\subsection{Geometry on $(0,\infty)\times SU(2)/\mathbb{Z}_n$}
Take an open interval $I$ and an $SU(2)$-invariant Riemannian metric on $I\times SU(2)/\mathbb{Z}_n$. By finding a geodesic which transverses all of the $SU(2)$ orbits perpendicularly, we can assume that, up to scaling and diffeomorphism, our Riemannian metric has the form 
\begin{align}\label{geodesicmetricform}
dr\otimes dr+g(r),
\end{align}
where $\{g(r)\}_{r\in I}$ is a one-parameter family of $SU(2)$-invariant Riemannian metrics on $SU(2)/\mathbb{Z}_n$. 
We now need to understand what these homogeneous Riemannian metrics look like. 
\begin{prop}\label{homogeneousdiagonal}
Let $g$ be a left-invariant Riemannian metric on $SU(2)$. Then there are left-invariant vector fields $X_1,X_2,X_3$ satisfying 
\begin{align}\label{Milnorframe}
[X_1,X_2]=X_3, \qquad [X_2,X_3]=X_1, \qquad [X_3,X_1]=X_2,
\end{align} (called a Milnor frame) and positive numbers $f_1,f_2,f_3$ so that 
\begin{align}\label{leftinvariantsu2}
g=f_1^2 \omega_1\otimes \omega_1+f_2^2 \omega_2\otimes \omega_2+f_3^2 \omega_3\otimes \omega_3,
\end{align}
where $\omega_i$ is the one-form corresponding to $X_i$. 
This Riemannian metric passes to an $SU(2)$-invariant Riemannian metric on $SU(2)/\mathbb{Z}_n$ if and only if one of the following conditions holds:
\begin{itemize}
\item $n=1,2$; 
\item $n=4$, and after possibly altering our choice of diagonalising Milnor frame, we have that $X_1$ is a scalar multiple of $u_1$, and $\text{span}\{X_2,X_3\}=\text{span}\{u_2,u_3\}$;
\item $n\in\mathbb{N}\setminus\{1,2,4\}$, we can choose our diagonalising Milnor frame with $X_i=\frac{u_i}{2}$ and in this basis, we have $f_2=f_3$.
\end{itemize}
\end{prop}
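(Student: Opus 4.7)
The plan is to first establish the existence of the diagonalising Milnor frame for any left-invariant metric on $SU(2)$, and then determine when such a metric descends to $SU(2)/\Z_n$ and how the Milnor frame can be adapted to the $\Z_n$-symmetry.

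For the first step, I would invoke Milnor's classical theorem for three-dimensional unimodular Lie groups: any inner product on $\mathfrak{su}(2)$ admits a $g$-orthonormal basis $\{e_1, e_2, e_3\}$ in which $[e_i, e_j] = \lambda_k e_k$ cyclically, and for compact $SU(2)$ the eigenvalues $\lambda_1, \lambda_2, \lambda_3$ may be chosen positive. Rescaling $X_i := e_i/\sqrt{\lambda_j \lambda_k}$, with $(i,j,k)$ a cyclic permutation of $(1,2,3)$, produces a frame satisfying \eqref{Milnorframe}, and $g$ then takes the diagonal form \eqref{leftinvariantsu2} with $f_i^2 = 1/(\lambda_j \lambda_k)$.

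For the descent condition, a left-invariant metric on $SU(2)$ passes to $SU(2)/\Z_n$ exactly when the bilinear form $g$ on $\mathfrak{su}(2)$ is $Ad_{\Z_n}$-invariant. By \eqref{u1adjointaction}, this action fixes $u_1$ and rotates $\spn\{u_2, u_3\}$ through the angles $4\pi k/n$ for $k = 0, \ldots, n-1$. I would tabulate the image subgroup of $SO(2)$: it is trivial for $n = 1, 2$ (using that $-I$ is central in $SU(2)$ so $Ad_{-I} = \id$), it is generated by rotation by $\pi$ when $n = 4$, and it contains a rotation of order at least three otherwise. Writing $g$ in the $u_i$ basis and imposing invariance in each case yields exactly the trichotomy of the proposition: no constraint on $g$ for $n = 1, 2$; the sole requirement $g(u_1, u_2) = g(u_1, u_3) = 0$ for $n = 4$; and that same orthogonality plus $g|_{\spn\{u_2, u_3\}}$ being a positive scalar multiple of the standard inner product for the remaining $n$.

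Finally I would construct the adapted Milnor frame in each case. When $n = 1, 2$ no adaptation is needed. When $n \notin \{1,2,4\}$, the identities $[u_1, u_2] = 2u_3$ etc.\ show that $X_i := u_i/2$ automatically satisfies \eqref{Milnorframe}, and the roundness of $g$ on $\spn\{u_2, u_3\}$ then forces $f_2 = f_3$ and vanishing off-diagonals. The main subtlety lies in the $n = 4$ case, where $g|_{\spn\{u_2, u_3\}}$ need not be round: I would choose an orthogonal eigenbasis $\{\tilde u_2, \tilde u_3\}$ of that restriction obtained by rotating $\{u_2, u_3\}$ by some angle $\psi$, and then verify by a direct computation (using that the cyclic structure of the brackets is preserved under rotations of the $(u_2, u_3)$-plane) that $X_1 := u_1/2,\ X_2 := \tilde u_2/2,\ X_3 := \tilde u_3/2$ form a Milnor frame. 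This delivers the adapted diagonalising frame with $X_1$ a scalar multiple of $u_1$ and $\spn\{X_2, X_3\} = \spn\{u_2, u_3\}$, as required. The only real work is in the $n = 4$ case, since for the remaining $n \geq 3$ the extra rotational symmetry of the descent condition collapses the frame down to the canonical choice $u_i/2$.
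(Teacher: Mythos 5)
Your proposal is correct and follows essentially the same route as the paper: reduce descent to $Ad_{\mathbb{Z}_n}$-invariance of the inner product on $\mathfrak{su}(2)$, compute the image of $\mathbb{Z}_n$ in $SO(2)$ acting on $\mathrm{span}\{u_2,u_3\}$, and adapt the frame case by case, with the $n=4$ case handled by an $SO(2)$ rotation of $u_2,u_3$ that preserves the cyclic bracket relations. The only difference is that you spell out the existence of the diagonalising Milnor frame via Milnor's classification and an explicit rescaling, where the paper simply cites this as well known.
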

\begin{proof}
The existence of a diagonalising Milnor frame for each left-invariant metric on $SU(2)$ is well-known, and essentially follows from the fact that the automorphism group of $\mathfrak{su}(2)$ is $SO(3)$. For the claims about passing these left-invariant Riemannian metrics to the quotient, we will work at the level of Lie algebras. Indeed, \eqref{leftinvariantsu2} defines an inner product on $\mathfrak{su}(2)$; this will define an invariant metric on $SU(2)/\mathbb{Z}_n$ if and only if this inner product is invariant under the action of $Ad_{\mathbb{Z}_n}$. According to 
\eqref{u1adjointaction}, if $n=1,2$, then $Ad_{u_{\theta}}$ acts as the identity for each $u_{\theta}\in\mathbb{Z}_n$, so all metrics of the form \eqref{leftinvariantsu2} are invariant in the quotient. If $n=4$, then two elements $u_{\theta}\in \mathbb{Z}_n$ act as the identity, and the other two elements operate as 
\begin{align*}
Ad_{u_{\theta}}(u_1)=u_1, \qquad Ad_{u_{\theta}}(u_2)=-u_2, \qquad Ad_{u_{\theta}}(u_3)=-u_3. 
\end{align*} 
This implies that the metric is invariant if and only if $u_1$ is orthogonal to $u_2$ and $u_3$. By scaling the $u_1,u_2,u_3$ basis by $\frac{1}{2}$ and applying an $SO(2)$ change to the $u_2,u_3$ basis, we obtain a Milnor frame diagonalising $g$. 
For other values of $n$, the abundance of adjoint actions fixing $u_1$ and rotating $u_2,u_3$ implies that we have invariance if and only if $u_1$ is orthogonal to $u_2,u_3$, and the metric restricted to the $u_2,u_3$ basis is a scalar multiple of the identity. We thus obtain our required Milnor frame simply with $X_i=\frac{u_i}{2}$. 
\end{proof}
We now make the interesting observation that if an $SU(2)$-invariant Riemannian metric on $I\times SU(2)/\mathbb{Z}_n$ is a gradient Ricci soliton with $SU(2)$-invariant potential function, then the choice of diagonalising Milnor frame can be made global. 
\begin{prop}\label{cohom1quotientmetric}
For an $SU(2)$-invariant Riemannian metric of the form \eqref{geodesicmetricform} on $I\times SU(2)$ which is a gradient Ricci soliton with $SU(2)$-invariant potential function $u$, there is a basis of vector fields $X_1,X_2,X_3$ on $SU(2)$ satisfying \eqref{Milnorframe} so that 
the metric has the form
\begin{align}\label{staticmetric}
dr\otimes dr+f_1(r)^2\omega_1\otimes \omega_1 +f_2(r)^2\omega_2\otimes \omega_2+f_3(r)^2\omega_3\otimes \omega_3,
\end{align}
where $f_i:I\to (0,\infty)$ are smooth functions. This metric passes to an $SU(2)$-invariant soliton on $I\times SU(2)/\mathbb{Z}_n$ if and only if one of the following is satisfied:
\begin{itemize}
\item $n=1,2$; 
\item $n=4$, and after possibly altering our choice of diagonalising Milnor frame, we have that $X_1$ is a scalar multiple of $u_1$, and $\text{span}\{X_2,X_3\}=\text{span}\{u_2,u_3\}$;
\item $n\in\mathbb{N}\setminus\{1,2,4\}$, we can choose our diagonalising Milnor frame with $X_i=\frac{u_i}{2}$ and in this basis, we have $f_2=f_3$.
\end{itemize}
\end{prop}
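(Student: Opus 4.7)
The plan is to upgrade the pointwise diagonalisation of Proposition~\ref{homogeneousdiagonal} to a single globally-defined Milnor frame, using the rigidity imposed by the soliton equation, and then read off the descent condition by applying Proposition~\ref{homogeneousdiagonal} at a single radius.

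First, using Proposition~\ref{homogeneousdiagonal} and smooth parameter dependence, I would choose a smooth family of Milnor frames $\{X_1(r), X_2(r), X_3(r)\}$ diagonalising $g(r)$, so that $g(r) = \sum_i f_i(r)^2 \omega_i(r) \otimes \omega_i(r)$. Writing $X_i'(r) = \sum_m a_{mi}(r) X_m(r)$ and differentiating the Milnor bracket relations $[X_i, X_j] = X_k$ in $r$ forces $a_{ii} \equiv 0$ and $a_{ij} = -a_{ji}$ (the frame rotates infinitesimally via $\mathfrak{so}(3) = \mathrm{Der}(\mathfrak{su}(2))$). Next, the $SU(2)$-invariance of $u$ gives $u = u(r)$, so the tangential Hessian is $\tfrac12 u'(r) g'(r)$ and the soliton equation restricted to the orbit reads
\begin{align*}
Ric(g)\big|_{\mathrm{orbit}} = -\tfrac12 u'(r) g'(r).
\end{align*}
Combined with the fact that $Ric(g)\big|_{\mathrm{orbit}}$ is invariant under the sign-flip subgroup $K = \{\mathrm{diag}(\varepsilon_1, \varepsilon_2, \varepsilon_3) : \varepsilon_i \in \{\pm 1\},\ \varepsilon_1\varepsilon_2\varepsilon_3 = 1\} \cong (\Z_2)^2 \subset \mathrm{Aut}(\mathfrak{su}(2))$ preserving the frame $\{X_i(r)\}$ — a consequence of the $K$-equivariance of the cohomogeneity-one Ricci tensor acting on configurations compatible with the diagonal ansatz — this equation forces $g'(r)$ to be diagonal in $\{X_i(r)\}$ whenever $u'(r) \neq 0$. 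A direct expansion then gives $g'(X_i(r), X_j(r)) = a_{ij}(r)(f_j(r)^2 - f_i(r)^2)$ for $i \neq j$, so on the dense open subset where the $f_i$'s are pairwise distinct and $u' \neq 0$, diagonality forces $a_{ij} \equiv 0$, and by continuity the frame is $r$-independent throughout $I$.

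The descent condition to $SU(2)/\mathbb{Z}_n$ is then immediate: applying Proposition~\ref{homogeneousdiagonal} to the left-invariant metric $g(r)$ in the now-fixed global frame at any single $r$ yields exactly the three listed alternatives, which are algebraic conditions on how $\{X_i\}$ sits relative to $u_1$ and on the equality $f_2 = f_3$ for $n \notin \{1,2,4\}$, conditions which must hold uniformly in $r$ because the frame is $r$-independent. The main obstacles are (i) carefully justifying the $K$-invariance of $Ric(g)\big|_{\mathrm{orbit}}$ given only that $g(r)$ itself is $K$-invariant at the single radius $r_0$: this requires unpacking the extrinsic curvature corrections to the warped-product Ricci formula in terms of the shape operator $L = \tfrac12 g^{-1} g'$, and showing that the off-diagonal components of the induced ODE for $L$ decouple and admit $L_{ij} \equiv 0$ as the unique solution; and (ii) handling degenerate radii, namely points where $u'(r) = 0$ (which are isolated in any non-Ricci-flat steady soliton by standard analyticity, with the Ricci-flat case treated separately via Proposition~\ref{homogeneousdiagonal} and a continuity argument) and points where two of the $f_i$ coincide (handled by using the residual $SO(2)$-freedom in the Milnor frame to glue the choice smoothly across such radii).
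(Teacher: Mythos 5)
Your central step does not close. You try to derive diagonality of $g'(r_0)$ from the \emph{tangential} soliton equation $Ric(g)\big|_{\mathrm{orbit}}=\lambda g-\tfrac12 u'g'$ together with an asserted $K$-invariance of $Ric(g)\big|_{\mathrm{orbit}}$ in a frame diagonalising $g(r_0)$. But the ambient Ricci tensor restricted to the orbit at $r_0$ depends on $g'(r_0)$ and $g''(r_0)$, which are exactly the quantities whose diagonality is in question; knowing only that $g(r_0)$ is $K$-invariant gives you no control over them, so the claimed equivariance is circular. You flag this yourself as ``obstacle (i)'' and propose to resolve it by showing the off-diagonal ODE for $L=\tfrac12 g^{-1}g'$ ``admits $L_{ij}\equiv 0$ as the unique solution'' --- but ODE uniqueness only helps once you have the initial condition $L_{ij}(r_0)=0$ at some radius, which is precisely the missing ingredient. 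The paper obtains that initial condition from a component of the soliton equation you never use: since $u=u(r)$, one has $Hess(u)(\partial_r,X)=0$ and $g(\partial_r,X)=0$, hence the \emph{mixed} Ricci component $Ric(\partial_r,X)$ must vanish; in a frame diagonalising $g(r_0)$ this is the first-order algebraic (Codazzi-type) constraint $(g^{ii}(r_0)-g^{jj}(r_0))g'_{ij}(r_0)=0$, which (after a case analysis on coinciding $g^{ii}$, using the residual $SO(2)$ or $SO(3)$ frame freedom) yields a Milnor frame diagonalising both $g(r_0)$ and $g'(r_0)$. Only then does ODE uniqueness for the full soliton system propagate diagonality to all of $I$.

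Two further points would need repair even granting the above. First, your appeal to a \emph{smooth} $r$-family of diagonalising Milnor frames is delicate at eigenvalue crossings, and your case analysis at radii where $u'=0$ (``isolated by analyticity'') does not cover the case $u'\equiv 0$ for non-steady solitons or the Einstein/Ricci-flat case, where one still needs the mixed-component argument ($Ric(\partial_r,X)=\lambda g(\partial_r,X)=0$) rather than a separate continuity argument; the paper's fixed-frame-plus-ODE-uniqueness route avoids both issues. Second, for the descent statement at $n=4$ it is not enough to apply Proposition~\ref{homogeneousdiagonal} at a single radius: one must show the globally diagonalising frame can be chosen \emph{adapted} to $u_1,u_2,u_3$ simultaneously for all $r$, which the paper does by rechoosing the frame at one radius using the quotient condition and then rerunning the ODE uniqueness argument.
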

\begin{proof}
Since the function $u:I\times SU(2)$ only depends on $r\in I$, we have that $Hess(u)(\partial_r,X)=0$ for any $X\in \mathfrak{su}(2)$. Thus, for the metric in \eqref{geodesicmetricform} to be a gradient Ricci soliton, we have $Ric(\partial_r,X)=0$ as well.  Now choose some $r_0\in I$. For any Milnor frame $\{X_1,X_2,X_3\}$ which diagonalises $g(r_0)$, the condition $Ric(\partial_r,X)=0$ at the $r_0$ orbit becomes 
\begin{align}\label{crossRiccisu2}
(g^{ii}(r_0)-g^{jj}(r_0))g_{ij}'(r_0)=0
\end{align}
for all $i\neq j$ (see Section 3 of \cite{Dammerman}). 
Equation \eqref{crossRiccisu2} implies that there is a Milnor frame $\{X_1,X_2,X_3\}$ diagonalising both $g(r_0)$ and $g'(r_0)$. Indeed, if we choose an initial Milnor frame diagonalising $g(r_0)$ and $g^{ii}(r_0)\neq g^{jj}(r_0)$ for all $i\neq j$, then $g'$ is diagonal at $r_0$, by \eqref{crossRiccisu2}. If $g^{ii}=g^{jj}\neq g^{kk}$, then \eqref{crossRiccisu2} implies that $g'_{ik}(r_0)=g'_{jk}(r_0)=0$, and we can rotate the basis in $X_i,X_j$ to assume that $g'_{ij}(r_0)=0$, because such a change preserves the equalities of \eqref{Milnorframe}. If $g^{ii}=g^{jj}=g^{kk}$, then we can apply an $SO(3)$ change to ensure that $g'(r_0)$ is diagonal because such changes preserve \eqref{Milnorframe}. 

 Now, according to \cite{Dancer13} (for example), the other gradient Ricci soliton equations become
\begin{align}\label{solitonODESgeneric}
\begin{split}
-tr(L')-tr(L^2)+u''-\lambda&=0,\\
R(g)-tr(L)L-L'+u'L-\lambda I&=0,\\
g'-2gL&=0,
\end{split}
\end{align}
where $R(g)$ is the Ricci curvature operator of $g(r)$ on $SU(2)$. Since $L$ and $g$ are diagonal at the point $r_0$, and $R(g)$ is diagonal in a Milnor frame whenever the same is true of $g$, standard existence and uniqueness results for ODEs imply that $L$ and $g$ are diagonal in this same basis for all $r\in I$. 

We now examine when metrics of the form \eqref{staticmetric} pass to $SU(2)$-invariant Riemannian metrics on $I\times SU(2)/\mathbb{Z}_n$. If $n=1,2$, there are no obstructions.

 If $n=4$ and there is an alternate globally-diagonalising Milnor frame with $X_1$ equal to a multiple of $u_1$ and $X_2,X_3$ spanned by $u_2,u_3$, then Proposition \ref{homogeneousdiagonal} implies that this metric is well-defined and $SU(2)$-invariant in the quotient. On the other hand, if the metric passes to the quotient, then for each $r\in I$, there is choice of Milnor frame which diagonalises $g(r)$, has $X_1$ lying in the direction of $u_1$ and has $X_2,X_3$ spanned by $u_2,u_3$; we aim to show that, for Ricci solitons, this basis can be chosen independently of $r\in I$. For a fixed $r_0\in I$, choose a diagonalising Milnor frame for $g(r_0)$ so that $X_1$ lies in the direction of $u_1$, and $X_2,X_3$ is spanned by $u_2,u_3$. Proposition \ref{homogeneousdiagonal} implies that $g(r)(u_1,u_3)=g(r)(u_1,u_2)=0$ for all other values of $r\in I$, so in the Milnor frame $X_1,X_2,X_3$, we have that $g'_{12}(r_0)=g'_{13}(r_0)=0$. By rotating the $X_2,X_3$ basis, we can also assume that $g'_{23}(r_0)=0$ as well, thanks to the equation $(g^{22}(r_0)-g^{33}(r_0))g'_{23}(r_0)=0$. Thus, we can again use the ODEs \eqref{solitonODESgeneric} to conclude that this new basis $X_1,X_2,X_3$ globally diagonalises $g(r)$ for Ricci solitons. 

If $n\in \mathbb{N}\setminus\{1,2,4\}$ and we have a gradient Ricci soliton of the form \eqref{staticmetric} so that $X_i=\frac{u_i}{2}$ and $f_2=f_3$ uniformly, then Proposition \ref{homogeneousdiagonal} implies that the metric passes to the quotient. On the other hand, if the metric passes to the quotient, then for any $r\in I$, the metric $g(r)$ is diagonal in the Milnor frame $\{\frac{u_1}{2},\frac{u_2}{2},\frac{u_3}{2}\}$, and the metric is a scalar multiple of the identity in the basis $\{\frac{u_2}{2},\frac{u_3}{2}\}$, so $f_2=f_3$ uniformly.
\end{proof}
Note that a large part of this proof is essentially identical to an analogous proof for Einstein metrics presented in Section 3 of \cite{Dammerman}, although the author suggests that this computation has been well-known in the physics community for quite some time. 
\subsection{Smoothness at the singular orbit $SU(2)/U(1)=\mathbb{S}^2$}
The point $r=0$ corresponds to the two-dimensional singular orbit $SU(2)/U(1)$. We will now recall some of the results of \cite{VZ}, which tell us when our Riemannian metric \eqref{staticmetric} on $(0,\infty)\times SU(2)/\mathbb{Z}_n$ can be smoothly extended to include this orbit. First, the functions $f_1^2,f_2^2,f_3^2:(0,\infty)\to (0,\infty)$ must all be continuously extendible to functions on $\mathbb{R}$. Second, two of the functions must be positive at $r=0$, while the third vanishes, since the singular orbit $SU(2)/U(1)$ is two-dimensional, and furthermore, the corresponding basis vector $X_i$ must generate the $U(1)$ subgroup. Thus, after taking one more change of diagonalising Milnor frame, we can assume that for metrics of the form \eqref{staticmetric} on $SU(2)\times_{U(1)}\mathbb{R}^2$, the $X_1$ vector generates $U(1)$ (we already knew this for $n\ge 3$, but now we can assume it is true for $n=1,2$ as well).
Then \cite{VZ} gives us the following smoothness conditions:
\begin{thm}
The Riemannian metric \eqref{staticmetric} defined on $(0,\infty)\times SU(2)/\mathbb{Z}_n$ can be interpreted as a smooth Riemannian metric on the principal part of $M=SU(2)\times_{U(1)} \mathbb{R}^2$. 
The metric is smoothly extendable to the singular $\mathbb{S}^2$ orbit if and only if there are smooth functions $\phi_1,\phi_2,\phi_3:\mathbb{R}\to \mathbb{R}$ with $\phi_2(0)>0$ so that 
\begin{align}\label{VZSmoothness}
f_1(r)^2=\frac{n^2r^2}{4}+r^4 \phi_1(r^2), \qquad f_2(r)^2=\phi_2(r^2)+r^{\frac{4}{n}}\phi_3(r^2), \qquad f_3(r)^2=\phi_2(r^2)-r^{\frac{4}{n}}\phi_3(r^2).
\end{align}
\end{thm}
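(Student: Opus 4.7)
The plan is to invoke the general smoothness criteria for cohomogeneity one metrics from \cite{VZ}, which reduces the question to a representation-theoretic computation of the slice and isotropy data at the singular orbit. The strategy has three stages: identify the relevant representations of $U(1)$, determine the allowed equivariant polynomials in slice coordinates, and match these to the metric components $f_1^2, f_2^2, f_3^2$ in the Milnor frame.

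First I would identify the slice representation. A tubular neighbourhood of the singular orbit is modelled on $SU(2)\times_{U(1)}\mathbb{R}^2$, and the $U(1)$-action on the slice $\mathbb{R}^2$ must have kernel exactly equal to the principal isotropy $\mathbb{Z}_n$. Therefore the slice representation is rotation of weight $n$, i.e., $u_\theta\cdot z=e^{in\theta}z$ on $\mathbb{R}^2\cong\mathbb{C}$. Meanwhile, by \eqref{u1adjointaction}, the isotropy action on $\text{span}(u_2,u_3)\cong \mathfrak{su}(2)/\mathfrak{u}(1)$ is rotation of weight $2$, while $u_1$ is fixed. The invariant radial coordinate on the slice is $r^2=|z|^2$, and any equivariant expression of weight $w$ (under the isotropy) must be expressible, as a smooth function on the slice, in the form $r^{2w/n}$ times a smooth function of $r^2$.

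Next I would translate these facts into the three stated conditions. For $f_1^2\,\omega_1\otimes\omega_1$: the vector field $X_1$ generates the $U(1)$-isotropy direction, and tracing its flow through the equivalence relation $(g,x)\sim(gk^{-1},kx)$ shows that $X_1$ rotates the slice with angular velocity $n/2$. Matching the length of a closed $X_1$-orbit at radius $r$ to the Euclidean circumference $2\pi r$ forces the leading behaviour $f_1(r)^2=n^2r^2/4$, with correction terms that must be smooth functions on the slice vanishing to order four in $r$, giving exactly $r^4\phi_1(r^2)$. For the $f_2,f_3$ pair: the symmetric invariant $f_2^2+f_3^2$ is fixed by the isotropy (weight $0$), so extends smoothly as $2\phi_2(r^2)$ with $\phi_2(0)>0$ since the singular $\mathbb{S}^2$ has positive area; the difference $f_2^2-f_3^2$ transforms under the weight-$4$ representation of the isotropy (because rotating the $(X_2,X_3)$ basis by angle $\psi$ changes $f_2^2-f_3^2$ by a $\cos(2\psi)$-type factor, consistent with weight $4$ on the symmetric square), so it must take the form $r^{4/n}$ times a smooth function of $r^2$, yielding $2r^{4/n}\phi_3(r^2)$.

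The main conceptual obstacle is justifying that \emph{every} smooth $U(1)$-equivariant tensor on the slice decomposes into the specific radial forms claimed, and conversely that any $f_i$ of these forms defines a smooth tensor across the singular orbit; this is precisely the content of \cite{VZ}, so the proof essentially reduces to quoting their general theorem with the weights $(n;0,2,2)$ I computed above. A secondary, more mechanical, check is to confirm that the change of Milnor frame used to place $X_1$ in the $u_1$-direction (permitted by Proposition \ref{cohom1quotientmetric} in the cases $n=1,2$) is compatible with this identification; this is straightforward since such a basis change is just an automorphism of $\mathfrak{su}(2)$ carrying the $U(1)$-fixed direction to itself.
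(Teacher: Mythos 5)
Your proposal is correct and follows essentially the same route as the paper, which in fact gives no proof at all for this statement: it simply quotes the general smoothness criteria of Verdiani--Ziller \cite{VZ}. Your identification of the relevant data --- slice representation of weight $n$ (since the kernel must be $\mathbb{Z}_n$), isotropy weights $0$ on $u_1$ and $2$ on $\text{span}\{u_2,u_3\}$ (hence weight $4$ on the traceless part of the symmetric square, producing the $r^{4/n}$ factor), and the normalisation $f_1 = \tfrac{n r}{2} + O(r^3)$ forced by the collapsing circle --- is exactly what is needed to specialise their theorem to this case.
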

\begin{rmk}
If $n\in \mathbb{N}\setminus\{1,2,4\}$, then we know that $\phi_3$ must vanish uniformly, by Proposition \ref{homogeneousdiagonal}. 
\end{rmk}
We now conclude this preliminary section by summarising the results we have seen:
\begin{thm}
For any $SU(2)$-invariant gradient Ricci soliton on $M=SU(2)\times_{U(1)}\mathbb{R}^2$ (with the $U(1)$ action on $\mathbb{R}^2$ determined by $U(1)/\mathbb{Z}_n=\mathbb{S}^1$), there are three left-invariant vector fields $X_1,X_2,X_3$ on $SU(2)$ satisfying \eqref{Milnorframe} so that the metric has the form \eqref{staticmetric}, the $X_1$ vector field generates $U(1)$, the corresponding $f_i$ functions satisfy \eqref{VZSmoothness} and $f_2=f_3$ if $n\in \mathbb{N}\setminus\{1,2,4\}$, and the metric is a gradient Ricci soliton on the principal part $(0,\infty)\times SU(2)/\mathbb{Z}_n$. Conversely, any metric satisfying all of these conditions will be a smooth gradient Ricci soliton on $M$. 
\end{thm}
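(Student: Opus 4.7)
My plan is to assemble the structural results developed so far in the right order; no new ideas are needed beyond Propositions \ref{homogeneousdiagonal} and \ref{cohom1quotientmetric} together with the Verdiani--Ziller smoothness theorem quoted above. The first step is to reduce to the principal part: an $SU(2)$-invariant gradient Ricci soliton on $M$ restricts to an $SU(2)$-invariant gradient Ricci soliton with $SU(2)$-invariant potential on the open dense set $(0,\infty)\times SU(2)/\Z_n$, and I parameterise by a perpendicular geodesic to place the metric in the normal form \eqref{geodesicmetricform}.

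Next I apply Proposition \ref{cohom1quotientmetric} to produce a globally diagonalising Milnor frame $X_1,X_2,X_3$ and the form \eqref{staticmetric}, together with the $n$-dependent restrictions on how the frame sits inside $\mathfrak{su}(2)$. For $n\in\N\setminus\{1,2,4\}$ this gives $X_1=u_1/2$, and for $n=4$ it gives $X_1$ parallel to $u_1$; in both cases $X_1$ already generates $U(1)=\exp(\R u_1)$, the isotropy at the singular orbit. For $n=1,2$, Proposition \ref{cohom1quotientmetric} leaves the frame flexible, but smoothness at the bolt forces exactly one of the $f_i$ to vanish at $r=0$, with the corresponding $X_i$ generating $U(1)$; a cyclic permutation of the Milnor frame, which preserves the bracket relations \eqref{Milnorframe}, slots that vector into $X_1$.

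With this normalisation in place I invoke the Verdiani--Ziller smoothness theorem to obtain \eqref{VZSmoothness}, and appeal to the third bullet of Proposition \ref{homogeneousdiagonal} to enforce $f_2=f_3$ when $n\notin\{1,2,4\}$. This completes the forward implication. The converse is essentially a smoothness-plus-density argument: Verdiani--Ziller extends the ansatz smoothly to $M$, $SU(2)$-invariance is built into the form \eqref{staticmetric}, and the soliton identity is a continuous tensor equation that holds on the dense principal part and therefore on all of $M$.

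The main obstacle, such as it is, lies in the $n=1,2$ branch of the second step: unlike for $n\ge 3$, the Milnor frame produced by Proposition \ref{cohom1quotientmetric} is not a priori aligned with the distinguished generator $u_1$ of the singular-orbit isotropy, and one must use the bolt geometry (via Verdiani--Ziller) to single out $X_1$ before relabelling. This is bookkeeping rather than analysis, but it is the only point in the proof where the conclusion about $X_1$ is not a verbatim consequence of an earlier result.
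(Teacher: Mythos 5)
Your proposal is correct and follows essentially the same route as the paper, which offers no separate proof but presents this theorem as a summary of Proposition \ref{homogeneousdiagonal}, Proposition \ref{cohom1quotientmetric}, and the Verdiani--Ziller smoothness conditions, with the same remark about relabelling the frame for $n=1,2$ so that $X_1$ generates $U(1)$. Your handling of the $n=1,2$ alignment via the vanishing of one $f_i$ at the bolt and a bracket-preserving permutation is exactly the "one more change of diagonalising Milnor frame" the paper invokes before stating the smoothness theorem.
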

\section{The initial value problem}
A computation (c.f. \cite{GroveZiller}) reveals that the metric \eqref{staticmetric} defined on $I\times SU(2)/\mathbb{Z}_n$ satisfies the gradient Ricci soliton equation with Einstein constant $\lambda$ and $SU(2)$-invariant potential function $u$ if and only if the following ordinary differential equations hold:
\begin{align}\label{solitonequationsfirst}
\begin{split}
 -\frac{f_1''}{f_1}-\frac{f_2''}{f_2}-\frac{f_3''}{f_3}+u''&=\lambda,\\
 -\frac{f_1''}{f_1}+\frac{f_1'}{f_1}\left(u'-\frac{f_2'}{f_2}-\frac{f_3'}{f_3}\right)+\frac{f_1^4-(f_2^2-f_3^2)^2}{2(f_1f_2f_3)^2}&=\lambda,\\
-\frac{f_2''}{f_2}+\frac{f_2'}{f_2}\left(u'-\frac{f_1'}{f_1}-\frac{f_3'}{f_3}\right)+\frac{f_2^4-(f_1^2-f_3^2)^2}{2(f_1f_2f_3)^2}&=\lambda,\\
-\frac{f_3''}{f_3}+\frac{f_3'}{f_3}\left(u'-\frac{f_2'}{f_2}-\frac{f_1'}{f_1}\right)+\frac{f_3^4-(f_2^2-f_1^2)^2}{2(f_1f_2f_3)^2}&=\lambda.
\end{split}
\end{align}
In order to understand solutions of this equation, we also find it convenient to consider other forms of the equations that arise by changing variables. For our first change, we take $\xi=\frac{f_1'}{f_1}+\frac{f_2'}{f_2}+\frac{f_3'}{f_3}-u'$, $L_i=\frac{f_i'}{f_i}$, $R_i=\frac{f_i}{f_jf_k}$ (with $i,j,k\in \{1,2,3\}$ chosen to be pairwise distinct) so that \eqref{solitonequationsfirst} becomes 
\begin{align}\label{xifirstchange}
\begin{split}
\xi'&=-L_1^2-L_2^2-L_3^2-\lambda,\\
L_i'&=-\xi L_i+\frac{R_i^2}{2}-\frac{(R_j-R_k)^2}{2}-\lambda,\\
R_i'&=R_i(L_i-L_j-L_k).
\end{split}
\end{align}
This change is convenient for constructing short-time solutions of \eqref{solitonequationsfirst} which satisfy the smoothness conditions \eqref{VZSmoothness}. 
\begin{thm}\label{shorttimen4}
If $n=4$, there is a continuous function $\phi:\mathbb{S}^2\to (1,\infty)\times \mathbb{R}^6$ with the following properties:
\begin{itemize}
\item there is a $T>0$ so that for each $(\alpha,\beta,\gamma)\in \mathbb{S}^2$, there is a solution of \eqref{xifirstchange} on $(0,T]$, with data at $r=T$ coinciding with $\phi(\alpha,\beta,\gamma)$;
\item the solution satisfies \begin{align}\label{abgn4}
\alpha=\lim_{r\to 0}\frac{\xi(r)-L_1(r)-L_2(r)-L_3(r)}{r}, \qquad \beta=\lim_{r\to 0}\frac{R_1(r)}{r}, \qquad \gamma=\lim_{r\to 0}\frac{L_2(r)-L_3(r)}{2};
\end{align}
\item if $\alpha=0$, then $\xi=L_1+L_2+L_3$ and $\sum_{i=1}^{3}\frac{R_i^2}{2}-\frac{(R_j-R_k)^2}{2}+L_i^2=2\lambda+\xi^2$  uniformly (this is the Einstein case);
\item if $\beta=0$, then $R_1=0$ uniformly and if $\beta>0$, the corresponding solution $(u',f_1,f_2,f_3)$ of \eqref{solitonequationsfirst} is such that $u'$ is a smooth and odd function around $r=0$, and the metric components $f_1,f_2,f_3$  satisfy the smoothness conditions of \eqref{VZSmoothness};
\item if $\gamma=0$, then $L_2=L_3$ and $R_2=R_3$ uniformly.
\end{itemize}
\end{thm}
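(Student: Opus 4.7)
I would recast the singular initial value problem at $r=0$ as an autonomous dynamical system with an equilibrium, and apply the center manifold theorem to obtain a multi-parameter family of trajectories approaching the equilibrium backward in time. Reading off the expected asymptotic behavior from the smoothness conditions \eqref{VZSmoothness} with $n=4$, the quantities $\xi$, $L_1$, $R_2$, $R_3$ all diverge like $1/r$, $R_1$ vanishes like $r$, and $L_2, L_3$ tend to finite values. I therefore set $\tilde\xi=r\xi$, $\tilde L_1=rL_1$, $\tilde R_2=rR_2$, $\tilde R_3=rR_3$, $\tilde R_1=R_1/r$, leave $L_2,L_3$ unchanged, and take $t=\log r$ as the new independent variable. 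In these coordinates the system \eqref{xifirstchange} becomes an autonomous ODE, and the smooth bolt at $r=0$ corresponds to an equilibrium $p_0$ at which $\tilde L_1=\tilde\xi=1$, $\tilde R_1=L_2=L_3=0$, and $\tilde R_2=\tilde R_3$ equal the reciprocal of twice the bolt value of $\phi_2$.

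Next I would linearize the new system at $p_0$ and compute its spectrum. Because only finitely many Taylor coefficients at $r=0$ are free in a smooth bolt extension via \eqref{VZSmoothness}, I expect the center-stable submanifold through $p_0$ to have dimension exactly three, matching the three free parameters $u''(0)$, $\phi_2(0)$, $\phi_3(0)$ that remain after fixing a normalization. The center manifold theorem then supplies a three-parameter family of trajectories asymptoting to $p_0$ as $t\to-\infty$, depending continuously on their initial center data. Matching the linear coordinates on the center-stable subspace with the limits $(\alpha,\beta,\gamma)$ of \eqref{abgn4}, and absorbing the residual scale freedom (which corresponds to shifting $t$, i.e., rescaling $r$) into the $\mathbb{S}^2$ normalization, I evaluate each trajectory at a fixed time $t_0=\log T$ to produce the required continuous map $\phi$. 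After possibly decreasing $T$, the relation $L_1(T)=\tilde L_1(t_0)/T$ places $\phi$ in $(1,\infty)\times\mathbb{R}^6$ upon ordering coordinates with $L_1$ first.

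For the three bullet points I would identify invariant subsets of the flow: (i) the Einstein locus $\xi=L_1+L_2+L_3$ together with the conserved Hamiltonian-type identity stated in the theorem, invariant by direct computation from \eqref{xifirstchange}; (ii) $\{R_1=0\}$, invariant since $R_1$ satisfies the multiplicative equation $R_1'=R_1(L_1-L_2-L_3)$; and (iii) the diagonal $\{L_2=L_3,\,R_2=R_3\}$, preserved by the manifest $2\leftrightarrow 3$ symmetry of \eqref{xifirstchange}. Each set is tangent to a coordinate hyperplane of the center-stable parameterization, so solutions sitting on it correspond exactly to the vanishing of one of $\alpha,\beta,\gamma$. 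The oddness and smoothness of $u'$ in the $\beta>0$ case follows by matching the formal power series expansion of the center manifold graph against the ansatz \eqref{VZSmoothness}. The main obstacle is the spectral calculation at $p_0$: verifying that the center-stable manifold has dimension exactly three, and that its linear coordinates correspond cleanly to $(\alpha,\beta,\gamma)$ as defined in \eqref{abgn4}.
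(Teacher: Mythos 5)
Your change of variables is essentially a reparametrisation of what the paper actually does: it writes the solution as an explicit singular part plus smooth corrections $\eta_i$ (the ansatz \eqref{n4etaic}) and diagonalises the resulting indicial matrix $A$, whose eigenvalues $-2,-2,-1,-1,0,1,1$ are exactly the spectrum you would compute at your equilibrium $p_0$ after the substitution $t=\log r$; the three parameters $(\alpha,\beta,\gamma)$ correspond to the single null direction (a curve of "equilibria" parametrised by $\gamma$) together with the two eigenvalue-$(+1)$ directions. Two slips before the main point: trajectories converging to $p_0$ as $t\to-\infty$ live on the \emph{center-unstable} manifold, not the center-stable one (which here would be five-dimensional, not three-dimensional); and the equilibrium value of $rR_2=rR_3$ is $\tfrac{1}{2}$ independently of the bolt size, since $f_1\sim 2r$ and $f_2(0)=f_3(0)$ — the freedom in $\phi_2(0)$ is carried by the $t$-translation, not by the location of $p_0$.

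The genuine gap is in your treatment of the smoothness conditions \eqref{VZSmoothness} and the oddness of $u'$. These require the full $C^\infty$ Taylor expansion of $f_1^2,f_2^2,f_3^2$ at $r=0$ to carry a specific parity structure (for $n=4$, $f_2^2-f_3^2$ must be a smooth \emph{odd} function of $r$), whereas a center-unstable manifold is in general only finitely differentiable and not unique, so "matching the formal power series of the center manifold graph against the ansatz" does not by itself produce the smooth functions $\phi_1,\phi_2,\phi_3$ of $r^2$. The paper sidesteps this by obtaining genuinely smooth $\eta_i$ directly from the singular-ODE existence theorem and then proving, by induction on the order of the Taylor coefficients, that the appropriate even and odd coefficients vanish; some substitute for that induction is unavoidable in your framework. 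A second, smaller gap concerns the $\alpha=0$ bullet: invariance of the Einstein locus only gives "Einstein $\Rightarrow$ $\alpha=0$". For the converse, the paper separately constructs a solution of the reduced system \eqref{xifirstchangeRicciflat} with the same $(\beta,\gamma)$, shows the conserved quantity $Z$ is bounded near $r=0$ and hence, via \eqref{Bianchi} and the blow-up of $L_1$, identically zero, and then identifies this solution with the $\alpha=0$ one by uniqueness. Your remark that the invariant sets are "tangent to coordinate hyperplanes" is the right heuristic, but it does not by itself show that the $\alpha=0$ trajectory actually lies on the Einstein locus.
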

\begin{proof}
Choose $(\alpha,\beta,\gamma)\in \mathbb{S}^2$. We look for solutions of the form
\begin{align}\label{n4etaic}
\begin{split}
\xi= \frac{1}{r}+\eta_0, \qquad L_1= \frac{1}{r}+\eta_1, \qquad L_2= \gamma+\eta_2, \qquad L_3= -\gamma+\eta_3,\\ R_1= \eta_4, \qquad R_2= \frac{1}{2r}+\gamma+\eta_5, \qquad R_3= \frac{1}{2r}-\gamma+\eta_6,
\end{split}
\end{align}
where $\eta_i$ are smooth functions satisfying $\eta_i(0)=0$. 
The equation for $\eta=(\eta_0,\cdots,\eta_6)$ becomes 
\begin{align}\label{etan4}
\eta'(r)&=\frac{A \eta(r)}{r}+K+B(\gamma,\eta(r)), 
\end{align}
where 
\begin{align*}
A=\begin{pmatrix}
0&-2&0&0&0&0&0\\
-1&-1&0&0&0&0&0\\
0&0&-1&0&\frac{1}{2}&\frac{1}{2}&-\frac{1}{2}\\
0&0&0&-1&\frac{1}{2}&-\frac{1}{2}&\frac{1}{2}\\
0&0&0&0&1&0&0\\
0&-\frac{1}{2}&\frac{1}{2}&-\frac{1}{2}&0&-1&0\\
0&-\frac{1}{2}&-\frac{1}{2}&\frac{1}{2}&0&0&-1
\end{pmatrix}, \qquad K=\begin{pmatrix}
-\lambda-2\gamma^2\\
-\lambda-2\gamma^2\\
-\lambda\\
-\lambda\\
0\\
2\gamma^2\\
2\gamma^2
\end{pmatrix},
\end{align*}
and $B$ is smooth with $B(\gamma,0)=0$. 
Note that $A=PDP^{-1}$, where 
\begin{align*}
D&=\begin{pmatrix}
-2&0&0&0&0&0&0\\
0&-2&0&0&0&0&0\\
0&0&-1&0&0&0&0\\
0&0&0&-1&0&0&0\\
0&0&0&0&0&0&0\\
0&0&0&0&0&1&0\\
0&0&0&0&0&0&1\\
\end{pmatrix},
\end{align*}
and 
\begin{align*}
P=\begin{pmatrix}
1&1&0&0&0&8&0\\
1&1&0&0&0&-4&0\\
\frac{1}{2}&-\frac{1}{2}&0&1&-1&0&\frac{1}{4}\\
-\frac{1}{2}&\frac{1}{2}&0&1&1&0&\frac{1}{4}\\
0&0&0&0&0&0&1\\
0&1&1&0&-1&1&0\\
1&0&1&0&1&1&0
\end{pmatrix},
\qquad & P^{-1}=\begin{pmatrix}
\frac{1}{6}&\frac{1}{3}&\frac{1}{4}&-\frac{1}{4}&0&-\frac{1}{4}&\frac{1}{4}\\
\frac{1}{6}&\frac{1}{3}&-\frac{1}{4}&\frac{1}{4}&0&\frac{1}{4}&-\frac{1}{4}\\
-\frac{1}{4}&-\frac{1}{4}&0&0&0&\frac{1}{2}&\frac{1}{2}\\
0&0&\frac{1}{2}&\frac{1}{2}&-\frac{1}{4}&0&0\\
0&0&-\frac{1}{4}&\frac{1}{4}&0&-\frac{1}{4}&\frac{1}{4}\\
\frac{1}{12}&-\frac{1}{12}&0&0&0&0&0\\
0&0&0&0&1&0&0
\end{pmatrix}.
\end{align*}
Using $\tilde{\eta}=P^{-1}\eta$, we then get 
\begin{align*}
\tilde{\eta}'=\frac{D\tilde{\eta}}{r}+P^{-1}K+P^{-1}(\gamma,P\tilde{\eta}). 
\end{align*}
We observe that the last two entires of $P^{-1}K$ both vanish. 
Therefore, a standard existence and uniqueness argument for singular ODEs can be used to show the existence of a short-time smooth solution $\tilde{\eta}$ with $\tilde{\eta}(0)=0$, $\tilde{\eta}'_i(0)$ determined by a function of $\lambda,\gamma$ for $i=0,\cdots,5$, and $\tilde{\eta}'_6(0),\tilde{\eta}'_7(0)$ arbitrary. We can choose $\tilde{\eta}'_7(0)=\beta$ and subsequently vary $\tilde{\eta}'_6(0)$ uniquely to satisfy \eqref{abgn4}. We can find a uniform time $T$ so that all solutions with  $(\alpha,\beta,\gamma)\in \mathbb{S}^2$ extend to this time, and the data $(\xi(T),L_i(T),R_i(T))$ depends continuously on $(\alpha,\beta,\gamma)\in \mathbb{S}^2$. We can also insist that $\xi(T)>1$ since $\xi$ starts at $+\infty$.

Since the $\eta_i$ functions are all smooth, the functions $\xi,L_i,R_i$ all have Laurent series expansions, with lowest order terms being $\frac{1}{r}$. Putting these expressions into \eqref{xifirstchange} allows us to prove that, for each $k\in \mathbb{N}$, the following holds, by induction on $k$:
\begin{itemize}
\item if $k$ is odd, then $\eta_2^{(k)}-\eta_3^{(k)}=\eta^{(k)}_5-\eta_6^{(k)}=0$ at $r=0$;
\item if $k$ is even, then $\eta_0^{(k)}=\eta_1^{(k)}=\eta_2^{(k)}+\eta_3^{(k)}=\eta_4^{(k)}=\eta^{(k)}_5+\eta_6^{(k)}=0$ at $r=0$.
\end{itemize}
Thus, the Laurent series expansions of $\frac{1}{R_2}-\frac{1}{R_3}$, $\frac{1}{R_2}+\frac{1}{R_3}$ and $\frac{1}{R_1}$ consist only of even, odd and odd powers respectively (the expression $\frac{1}{R_1}$ is defined away from $r=0$ if $\beta\neq 0$). Thus, the equalities 
\begin{align*}
f_3^2-f_2^2=\frac{1}{R_1}\left(\frac{1}{R_2}-\frac{1}{R_3}\right), \qquad f_2^2+f_3^2=\frac{1}{R_1}\left(\frac{1}{R_2}+\frac{1}{R_3}\right)
\end{align*}
give the existence of the required smooth functions $\phi_2,\phi_3$ in \eqref{VZSmoothness}. Also, the fact that $\eta_1$ is a smooth odd function with $f_1'=f_1\left(\frac{1}{r}+\eta_1\right)$ and $f_1(0)=0$ implies that $f_1$ is a smooth and odd function. We also have $f_1'(0)=2$ because of $\lim_{r\to 0}rR_2(r)=\lim_{r\to 0}rR_3(r)=\frac{1}{2}$; the existence of the smooth function $\phi_1$ follows. Finally, the fact that $\eta_0,\eta_1,\eta_2+\eta_3$ are all smooth and odd implies that $u'=L_1+L_2+L_3-\xi$ is smooth and odd, as required. 

Finally, we characterise the three parameters $\alpha,\beta,\gamma$. If $\beta=0$, then $R_1'(0)=0$, and the equality $R_1'=R_1(\frac{1}{r}+\eta_1-\eta_2-\eta_3)$ implies that $R_1=0$ uniformly. If $\beta>0$, then $R_1(r)>0$ for $r>0$ by the same equation. If $\gamma=0$, then $L_2-L_3$ and $R_2-R_3$ are both zero initially, and the equations 
\begin{align*}
(L_2-L_3)'=-\xi (L_2-L_3)+(R_2-R_3)(R_2+R_3-R_1), \qquad (R_2-R_3)'=-L_1(R_2-R_3)+(R_2+R_3)(L_2-L_3)
\end{align*}
can  be written as 
\begin{align*}
\begin{pmatrix}
L_2-L_3\\
R_2-R_3
\end{pmatrix}'=\left(\frac{1}{r}
\begin{pmatrix}
-1&1\\
1&-1
\end{pmatrix}+B(r)\right)\begin{pmatrix}
L_2-L_3\\
R_2-R_3
\end{pmatrix},
\end{align*}
where $B(r)$ is smooth. Since the square matrix $\begin{pmatrix}
-1&1\\
1&-1
\end{pmatrix}$ has eigenvalues $0$ and $-2$, we conclude that $L_2=L_3$ and $R_2=R_3$ uniformly. 

We finally turn to the characterisation of $\alpha$. For each $\beta,\gamma$ we can use another ODE existence and uniqueness argument to find a solution of 
\begin{align}\label{xifirstchangeRicciflat}
\begin{split}
L_i'&=-(L_1+L_2+L_3) L_i+\frac{R_i^2}{2}-\frac{(R_j-R_k)^2}{2}-\lambda,\\
R_i'&=R_i(L_i-L_j-L_k),
\end{split}
\end{align}
which satisfies $\beta=\lim_{r\to 0}\frac{R_1(r)}{r}$ and $\gamma=\lim_{r\to 0}\frac{L_2(r)-L_3(r)}{2}$, and with $L_i,R_i$ satisfying the conditions in \eqref{n4etaic}. For this solution, we find that the quantity $Z=\left(\sum_{i=1}^{3}\frac{R_i^2}{2}-\frac{(R_j-R_k)^2}{2}+L_i^2\right)-2\lambda-(L_1+L_2+L_3)^2$ satisfies 
\begin{align}\label{Bianchi}
\frac{Z'}{2}=-(L_1+L_2+L_3)Z.
\end{align} Now, by expanding the expression for $Z$ using \eqref{n4etaic}, we find that all of the $\frac{1}{r},\frac{1}{r^2}$ terms cancel, so $Z$ is bounded as $r\to 0$, so by \eqref{Bianchi} and the fast blow-up of $L_1$, $Z$ must in fact vanish uniformly. By defining $\xi=L_1+L_2+L_3$, we thus obtain a solution of \eqref{xifirstchange}, \eqref{n4etaic} and \eqref{abgn4}, with $\alpha=0$; by uniqueness, this solution must co-incide with the solution we have already obtained. 
\end{proof}
A similar result holds for $n=1,2$; the only significant difference is that $\gamma$ characterises a higher order decay of $L_2-L_3$ and $R_2-R_3$. 
For other values of $n\in \mathbb{N}$, the following result concerning $U(2)$-invariant solutions is standard:
\begin{thm}\label{shorttimen}
For each $n\in \mathbb{N}$, there exists a continuous function $\phi:\mathbb{S}^1\to (1,\infty)\times \mathbb{R}^6$ with the following properties:
\begin{itemize}
\item there is a $T>0$ so that for each $(\alpha,\beta)\in \mathbb{S}^1$, there is a solution of \eqref{xifirstchange} with $L_2=L_3$ and $R_2=R_3$ on $(0,T]$, with data at $r=T$ coinciding with $\phi(\alpha,\beta)$;
\item the solution satisfies \begin{align}\label{abgn}
\alpha=\lim_{r\to 0}\frac{\xi(r)-L_1(r)-2L_2(r)}{r}, \qquad \beta=\lim_{r\to 0}\frac{R_1(r)}{r};
\end{align}
\item if $\alpha=0$, then $\xi=L_1+L_2+L_3$ and $L_1^2+2L_2^2+2R_1R_2-\frac{R_1^2}{2}-(L_1+2L_2)^2=2\lambda$ uniformly;
\item if $\beta=0$, then $R_1=0$ uniformly, and if $\beta>0$, then the corresponding solution of \eqref{solitonequationsfirst} satisfies the smoothness conditions of \eqref{VZSmoothness} at $r=0$. 
\end{itemize}
If $n=4$, the function $\phi(\alpha,\beta)$ coincides with $\phi(\alpha,\beta,0)$ from Theorem \ref{shorttimen4}. 
\end{thm}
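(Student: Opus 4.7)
The plan is to mirror the construction in Theorem \ref{shorttimen4}, exploiting the simplification that the imposed symmetry $L_2=L_3$, $R_2=R_3$ (manifestly preserved by \eqref{xifirstchange}) collapses the system onto a five-dimensional subsystem in $(\xi,L_1,L_2,R_1,R_2)$. The smoothness conditions \eqref{VZSmoothness} with $\phi_3\equiv 0$ dictate the leading-order behaviour as $r\to 0$: $f_1\sim \tfrac{n}{2}r$ and $f_2=f_3\sim\sqrt{\phi_2(0)}$, so that $\xi,L_1\sim \tfrac{1}{r}$ and $R_2=R_3\sim \tfrac{2}{nr}$. Accordingly I substitute
\begin{align*}
\xi = \tfrac{1}{r}+\eta_0,\quad L_1=\tfrac{1}{r}+\eta_1,\quad L_2=\eta_2,\quad R_1=\eta_3,\quad R_2=\tfrac{2}{nr}+\eta_4,
\end{align*}
with $\eta_i(0)=0$, into the reduced system. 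This yields a singular ODE $\eta'=\tfrac{1}{r}A\eta+K+B(\eta)$, where $B$ is smooth with $B(0)=0$, and the $5\times 5$ matrix $A$ together with $K\in\R^5$ depend on $n$ through the $\tfrac{2}{nr}$ leading term of $R_2$.

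The crux is to diagonalise $A=PDP^{-1}$ and verify that the entries of $P^{-1}K$ corresponding to positive eigenvalues of $D$ vanish. This is the $n$-dependent algebraic step that I expect to be the main obstacle, although the structure of the $n=4$ computation strongly suggests it goes through, with two unstable directions giving two free parameters in the initial data. Once the compatibility holds, the standard singular-ODE existence and uniqueness result used in the proof of Theorem \ref{shorttimen4} delivers, for any prescribed values of the two free initial derivatives $\tilde\eta'_j(0)$, a unique smooth short-time solution, with the remaining components of $\tilde\eta'(0)$ determined by $\lambda$ and $n$. One free parameter is used to prescribe $\beta=\lim_{r\to 0}R_1(r)/r$, and the other is uniquely tuned so that the normalisation in \eqref{abgn} returns the desired value of $\alpha$. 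Compactness of $\mathbb{S}^1$ and continuous dependence then give a uniform existence time $T>0$ and the continuous map $\phi:\mathbb{S}^1\to(1,\infty)\times\R^6$, with $\xi(T)>1$ automatic from $\xi\to\infty$ at $r=0$.

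Smoothness and parity of the corresponding $(u',f_1,f_2,f_3)$ follow via the parity induction from the $n=4$ case: substituting the Laurent expansions back into \eqref{xifirstchange} forces $\eta_0,\eta_1,\eta_3$ to be odd and $\eta_2,\eta_4$ to be even at $r=0$, so that $f_1^2=1/R_2^2$ and $f_2^2=f_3^2=1/(R_1R_2)$ (well-defined when $\beta>0$ since $R_1\sim\beta r$) have the form demanded by \eqref{VZSmoothness}, and $u'=L_1+2L_2-\xi$ is smooth and odd. The $\beta=0$ bullet follows from $R_1'=R_1(L_1-2L_2)$ and uniqueness. For the $\alpha=0$ bullet, the Bianchi-type identity $Z'=-2(L_1+2L_2)Z$ for $Z=L_1^2+2L_2^2+2R_1R_2-\tfrac{R_1^2}{2}-(L_1+2L_2)^2-2\lambda$, combined with the Laurent expansion showing $Z$ bounded near $r=0$, forces $Z\equiv 0$ on solutions of the associated Einstein system (where $\xi$ is replaced by $L_1+L_2+L_3$); setting $\xi:=L_1+L_2+L_3$ then produces a solution of \eqref{xifirstchange} with $\alpha=0$, which by uniqueness must coincide with the original. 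Finally, when $n=4$, the $\gamma=0$ conclusion of Theorem \ref{shorttimen4} already gives $L_2=L_3$ and $R_2=R_3$ uniformly, so uniqueness in the reduced system identifies $\phi(\alpha,\beta,0)$ there with the $\phi(\alpha,\beta)$ constructed here.
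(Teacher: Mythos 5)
The paper offers no proof of this theorem (it is dismissed as ``standard''), so the natural benchmark is the proof of Theorem \ref{shorttimen4}, and your proposal follows that template in exactly the intended way: the ansatz, the reduction to a five-dimensional singular system in $(\xi,L_1,L_2,R_1,R_2)$, the identification of two free parameters, and the treatment of the $\alpha=0$, $\beta=0$ and $n=4$ bullets are all sound. The one step you leave unverified --- the compatibility of $K$ with the unstable directions of $A$ --- does hold for every $n$ and every $\lambda$: in the ordering $(\eta_0,\dots,\eta_4)$ the linearisation has rows $(0,-2,0,0,0)$, $(-1,-1,0,0,0)$, $(0,0,-1,\tfrac{2}{n},0)$, $(0,0,0,1,0)$, $(0,-\tfrac{2}{n},0,0,-1)$, with eigenvalues $\{-2,-1,-1,1,1\}$; the left eigenvectors for the eigenvalue $1$ are spanned by $(1,-1,0,0,0)$ and $(0,0,0,1,0)$, and both annihilate $K=-\lambda(1,1,1,0,0)^{T}$. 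Solving $(I-A)c=K$ then leaves $c_3=\beta$ free together with the one-parameter family $c_0+2c_1=-\lambda$, and $\alpha=c_0-c_1-2c_2=-\lambda-3c_1-2c_2$ sweeps out all of $\mathbb{R}$ as $c_1$ varies, so the matching to $(\alpha,\beta)\in\mathbb{S}^1$ works as you describe.

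The genuine error is the parity claim. You assert that $\eta_2$ and $\eta_4$ are even; they are not: the computation above gives $c_2=\tfrac12\bigl(-\lambda+\tfrac{2\beta}{n}\bigr)$ and $c_4=-c_1/n$, generically nonzero, so both have nontrivial first-order (odd) terms. Moreover, the step as written would fail to deliver \eqref{VZSmoothness}: if $\eta_3$ were odd and $\eta_4$ genuinely even, then $R_1R_2=\eta_3\bigl(\tfrac{2}{nr}+\eta_4\bigr)$ would contain the odd term $\eta_3\eta_4$, so $f_2^2=1/(R_1R_2)$ would not be a function of $r^2$. The correct statement --- and the one that the induction in the proof of Theorem \ref{shorttimen4} yields when restricted to the invariant locus $\eta_2=\eta_3$, $\eta_5=\eta_6$ --- is that \emph{all five} perturbations $\eta_0,\dots,\eta_4$ are odd to all orders at $r=0$. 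This follows immediately from symmetry: the nonlinearity of the singular system is homogeneous quadratic, hence even, so $r\mapsto-\eta(-r)$ solves the same system with the same value and the same free first-order data, and uniqueness forces every even-order Taylor coefficient of $\eta$ to vanish. With all $\eta_i$ odd, $R_1$ and $R_2$ are odd, so $R_2^2$ and $R_1R_2$ are even, $u'=L_1+2L_2-\xi=\eta_1+2\eta_2-\eta_0$ is odd, and the required $\phi_1,\phi_2$ (with $\phi_3\equiv0$) exist; the remainder of your argument then goes through unchanged.
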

With our short-time existence results in hand, we now introduce our second change of variables: $\frac{1}{\mathcal{L}}=\frac{f_1'}{f_1}+\frac{f_2'}{f_2}+\frac{f_3'}{f_3}-u'$, $X_i=\frac{\mathcal{L}f_i'}{f_i}$ and $Y_i=\frac{\mathcal{L}f_i}{f_jf_k}$, with $i,j,k$ again chosen to be pairwise distinct. Then 
\begin{align}\label{XYB4Chain}
\begin{split}
\mathcal{L}\mathcal{L}'&=\mathcal{L}\left(X_1^2+X_2^2+X_3^2+\lambda \mathcal{L}^2\right),\\
\mathcal{L}X_i'&=\frac{Y_i^2}{2}-\frac{(Y_j-Y_k)^2}{2}-X_i-\lambda\mathcal{L}^2+X_i(X_1^2+X_2^2+X_3^2+\lambda \mathcal{L}^2),\\
\mathcal{L}Y_i'&=Y_i(X_i-X_j-X_k+X_1^2+X_2^2+X_3^2+\lambda \mathcal{L}^2),
\end{split}
\end{align}
with $j,k$ as before. The solutions we have constructed in Theorems \ref{shorttimen4} and \ref{shorttimen} satisfy the conditions of \eqref{VZSmoothness}, so in these new variables, we have
\begin{align}\label{smoothnessXY}
\lim_{r\to 0}\frac{\mathcal{L}(r)}{r}=1, \qquad \lim_{r\to 0}X_1(r)=1, \qquad \lim_{r\to 0}X_2(r)=\lim_{r\to 0}X_3(r)=0, \qquad \lim_{r\to 0}Y_1(r)=0, \qquad \lim_{r\to 0}Y_2(r)=\lim_{r\to 0}Y_3(r)=\frac{2}{n}. 
\end{align}
The condition $\lim_{r\to 0}\frac{\mathcal{L}(r)}{r}$ implies that we can define a function $r(s)$ so that $r'(s)=\mathcal{L}(r(s))$,  $r(0)=T$ and $\lim_{s\to -\infty}r(s)=0$, so we arrive at the system
\begin{align}\label{XYEquations}
\begin{split}
\frac{d\mathcal{L}}{ds}&=\mathcal{L}\left(X_1^2+X_2^2+X_3^2+\lambda \mathcal{L}^2\right),\\
\frac{dX_i}{ds}&=\frac{Y_i^2}{2}-\frac{(Y_j-Y_k)^2}{2}-X_i-\lambda\mathcal{L}^2+X_i(X_1^2+X_2^2+X_3^2+\lambda \mathcal{L}^2), \ i=1,2,3,\\
\frac{d Y_i}{ds}&=Y_i(X_i-X_j-X_k+X_1^2+X_2^2+X_3^2+\lambda \mathcal{L}^2), \ i=1,2,3;
\end{split}
\end{align}
our solutions tend to the point $(0,1,0,0,0,\frac{2}{n},\frac{2}{n})$ as $s$ tends to $-\infty$. This system is particularly useful if $\lambda=0$, because then the last six equations do not feature $\mathcal{L}$ explicitly. 
We will use this system of equations, as well as \eqref{xifirstchange}, to study the long-time behaviour of our solutions. In particular, these two systems will help us to determine when the corresponding gradient solitons are complete. 

We conclude this section by detailing the extent to which solutions of our initial value problem can be extended.
\begin{prop}\label{Extend}
If $\lambda\ge 0$, a solution of \eqref{xifirstchange} on $[T,S)$ with $S<\infty$ which cannot be extended past $S$ has $\lim_{r\to S}\xi(r)=-\infty$. 
\end{prop}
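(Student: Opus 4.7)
The plan is to argue by contradiction: suppose the solution cannot be extended past $S<\infty$ but $\lim_{r\to S}\xi(r)>-\infty$, and show this forces all state variables $(\xi,L_i,R_i)$ to remain bounded on $[T,S)$, which by the standard ODE extension theorem contradicts the non-extendibility.

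The key observation is that $\lambda\ge 0$ makes $\xi$ monotone. Indeed, from \eqref{xifirstchange} we have $\xi'=-L_1^2-L_2^2-L_3^2-\lambda\le 0$, so $\xi$ is non-increasing on $[T,S)$ and the limit $\lim_{r\to S}\xi(r)$ exists in $[-\infty,\xi(T)]$. Under the contradiction hypothesis this limit is finite, so $\xi$ is bounded on $[T,S)$, say $|\xi|\le M$. Integrating the $\xi$ equation and using $S<\infty$ yields
\begin{equation*}
\int_T^S (L_1^2+L_2^2+L_3^2)\,dr=\xi(T)-\lim_{r\to S}\xi(r)-\lambda(S-T)<\infty,
\end{equation*}
and Cauchy--Schwarz then gives $\int_T^S|L_i|\,dr<\infty$ for each $i$.

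Next I would bound the $R_i$. The equation $R_i'=R_i(L_i-L_j-L_k)$ is linear and homogeneous in $R_i$, so if $R_i(T)=0$ then $R_i\equiv 0$; otherwise $R_i>0$ throughout and
\begin{equation*}
|\log R_i(r)-\log R_i(T)|\le \int_T^S(|L_1|+|L_2|+|L_3|)\,dr<\infty,
\end{equation*}
giving uniform upper and positive lower bounds on $R_i$. In particular the inhomogeneous term $\tfrac{R_i^2}{2}-\tfrac{(R_j-R_k)^2}{2}-\lambda$ in the $L_i$-equation is uniformly bounded on $[T,S)$. Viewing
\begin{equation*}
L_i'=-\xi L_i+\Bigl(\tfrac{R_i^2}{2}-\tfrac{(R_j-R_k)^2}{2}-\lambda\Bigr)
\end{equation*}
as a scalar linear ODE in $L_i$ with coefficient $-\xi$ bounded by $M$ and bounded forcing, Gr\"onwall's inequality on the finite interval $[T,S)$ yields a uniform bound on $|L_i|$.

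Thus $(\xi,L_i,R_i)$ remains in a compact subset of the phase space on $[T,S)$, so the standard ODE continuation theorem furnishes an extension past $S$, contradicting the assumption. The argument is quite direct; there is no real obstacle beyond checking that the $R_i$ bound survives the possibility $R_i(T)=0$, which is handled by the homogeneity of the $R_i$-equation.
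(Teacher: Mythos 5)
Your proposal is correct and follows essentially the same route as the paper: both arguments hinge on the monotonicity of $\xi$ for $\lambda\ge 0$, the identity $\int_T^S\sum_i L_i^2\,dr=\xi(T)-\lim_{r\to S}\xi(r)-\lambda(S-T)$, Cauchy--Schwarz to control $\int|L_i|$, and the multiplicative structure of the $R_i$-equation. The paper merely runs the same chain contrapositively (unbounded $L_i$ forces unbounded $R_i$, which forces $\int|L_i|=\infty$, contradicting bounded $\xi$), whereas you establish the bounds directly and invoke the continuation theorem.
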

\begin{proof}
Note that $\xi$ is monotone decreasing, so if $\lim_{r\to S}\xi(r)\neq -\infty$, then $\xi$ is uniformly bounded on $[T,S)$. Thus, if $S$ is a singular time, then $R_i$ or $L_i$ must become unbounded around $S$ for some $i=1,2,3$. The equation for $L_i'$ and the uniform boundedness of $\xi$ implies that if $L_i$ becomes unbounded for some $i$, then $R_i$ must be unbounded for some $i$. The equation for $R_i'$ implies that if $R_i$ becomes unbounded, then $\int_T^S\left|L_i\right|=\infty$ for some 
$i=1,2,3$. But then $\int_T^S L_i^2=\infty$ as well, which contradicts $\xi'=-L_1^2-L_2^2-L_3^2-\lambda$ and the uniform boundedness of $\xi$. 
\end{proof}
We thus find that a solution can always be extended as long as $\xi>0$; we will often use this fact without reference in the rest of the paper.


\section{$U(2)$-invariant solitons}
The solutions from Theorem \ref{shorttimen} have $R_2=R_3$ and $L_2=L_3$ everywhere, or equivalently, $Y_2=Y_3$ and $X_2=X_3$ everywhere. If $R_1,R_2,R_3>0$, the resulting Riemannian metric on $I\times SU(2)/\mathbb{Z}_n$ is also invariant under an action of $U(2)$ which acts transitively on the fibers $SU(2)/\mathbb{Z}_n$. 
In this section, we review and substantially simplify the construction of a number of complete $U(2)$-invariant Ricci solitons of the form \eqref{staticmetric} that arise from Theorem \ref{shorttimen}, as some of these are directly relevant to our construction of new $SU(2)$-invariant solitons. The equations in this case reduce to 
\begin{align}\label{xifirstchangebi}
\begin{split}
\xi'&=-L_1^2-2L_2^2-\lambda,\\
L_1'&=-\xi L_1+\frac{R_1^2}{2}-\lambda,\\
L_2'&=-\xi L_2+R_1R_2-\frac{R_1^2}{2}-\lambda,\\
R_1'&=R_1(L_1-2L_2),\\
R_2'&=-R_2L_1,
\end{split}
\end{align}
and 
\begin{align}\label{XYEquationsbi}
\begin{split}
\frac{d \mathcal{L}}{ds}&=\mathcal{L}(X_1^2+2X_2^2+\lambda \mathcal{L}^2),\\
\frac{dX_1}{ds}&=\frac{Y_1^2}{2}-X_1-\lambda\mathcal{L}^2+X_1(X_1^2+2X_2^2+\lambda \mathcal{L}^2),\\
\frac{dX_2}{ds}&=Y_1Y_2-\frac{Y_1^2}{2}-X_2-\lambda\mathcal{L}^2+X_2(X_1^2+2X_2^2+\lambda \mathcal{L}^2),\\
\frac{d Y_1}{ds}&=Y_1(X_1-2X_2+X_1^2+2X_2^2+\lambda \mathcal{L}^2),\\
\frac{d Y_2}{ds}&=Y_2(-X_1+X_1^2+2X_2^2+\lambda \mathcal{L}^2).
\end{split}
\end{align}
Theorem \ref{shorttimen} implies that there exists a family of solutions characterised by $(\alpha,\beta)\in \mathbb{S}^1$. In this section, we introduce the closed set $U=\{(\alpha,\beta)\vert \alpha^2+\beta^2=1, \alpha\ge 0, \beta\ge 0\}$ and study how varying $(\alpha,\beta)\in U$ can be used to produce complete solitons. We can ignore the case $\alpha<0$ because Proposition 2.3 of \cite{Dancer13} implies that $u'=L_1+L_2+L_3-\xi<0$ everywere on a complete steady soliton. 
\subsection{Existence of complete solitons}
In this subsection, we recover the existence of a family of $U(2)$-invariant steady ($\lambda=0$) solitons which were first constructed independently in \cite{Stolarski,Wink}. The first step is to study the case that $(\alpha,\beta)=(1,0)$. 
\begin{prop}\label{gammabeta0}
If $\lambda=0$, the solution of \eqref{XYEquationsbi} starting at $\phi(1,0)$ (described in Theorem \ref{shorttimen}) exists for all large $s$, the functions $\mathcal{L},X_1,X_2,Y_1,Y_2$ all have real-valued limits as $s$ tends to $\infty$, and moreover we have 
\begin{align*}
\lim_{s\to \infty}\mathcal{L}(s)>0, \qquad \lim_{s\to \infty}X_1(s)=\lim_{s\to \infty}X_2(s)=\lim_{s\to \infty}Y_1(s)=0, \qquad \lim_{s\to \infty}Y_2(s)>0. 
\end{align*}
\end{prop}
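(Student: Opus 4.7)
The plan is to exploit the boundary data $(\alpha,\beta)=(1,0)$ to collapse \eqref{XYEquationsbi} to a single scalar autonomous ODE for $X_1$, solve it explicitly, and then read off the asymptotics of $\mathcal{L}$ and $Y_2$ by integration.

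First, the fourth conclusion of Theorem \ref{shorttimen} applied with $\beta=0$ forces $R_1\equiv 0$ and hence $Y_1\equiv 0$ on the whole trajectory. With $\lambda=0$ and $Y_1\equiv 0$, both $X_1$ and $X_2$ in \eqref{XYEquationsbi} satisfy the same equation
\begin{align*}
\frac{dX_i}{ds}=X_i(Q-1),\qquad Q:=X_1^2+2X_2^2,\quad i=1,2,
\end{align*}
so the quotient $X_2/X_1$ is conserved wherever $X_1\neq 0$. The smoothness data \eqref{smoothnessXY} give $X_1\to 1$ and $X_2\to 0$ as $s\to-\infty$, so this conserved quantity vanishes on a neighbourhood of $-\infty$; uniqueness from Theorem \ref{shorttimen} then propagates $X_2\equiv 0$ to the full trajectory.

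Next I would analyse the residual three-dimensional system
\begin{align*}
\frac{dX_1}{ds}=X_1(X_1^2-1),\qquad \frac{d\mathcal{L}}{ds}=\mathcal{L} X_1^2,\qquad \frac{dY_2}{ds}=Y_2(X_1^2-X_1).
\end{align*}
The substitution $v=X_1^{-2}-1$ sends the scalar equation to $v'=2v$, giving $X_1(s)=(1+Ce^{2s})^{-1/2}$ for some constant $C$. The expansion in Theorem \ref{shorttimen} yields $u'(r)\approx -\alpha r=-r$ for small $r>0$ when $\alpha=1$, which in the $X$ variables reads $X_1+2X_2<1$ near $s=-\infty$; combined with $X_2\equiv 0$ this forces $X_1<1$ near $-\infty$, hence $C>0$, and the explicit formula then extends to all $s\in\mathbb{R}$. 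In particular $X_1\to 1$ as $s\to-\infty$ and $X_1\sim C^{-1/2}e^{-s}\to 0$ as $s\to+\infty$.

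Finally, since $X_1\sim e^{-s}$ at $+\infty$ and $1-X_1=O(e^{2s})$ at $-\infty$, the integrals $\int_0^{+\infty}X_1^2\,ds$ and $\int_{-\infty}^{+\infty}(X_1^2-X_1)\,ds$ converge absolutely. Integrating the $\mathcal{L}$ equation gives $\mathcal{L}(s)\to \mathcal{L}(0)\exp\bigl(\int_0^{\infty}X_1^2\,ds\bigr)>0$; integrating the $Y_2$ equation from $-\infty$ gives $Y_2(s)\to (2/n)\exp\bigl(\int_{-\infty}^{\infty}(X_1^2-X_1)\,ds\bigr)>0$; and $X_1,X_2,Y_1$ all tend to $0$. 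The step I expect to be most delicate is the identification $X_2\equiv 0$ from only a limit condition at $s=-\infty$; the conserved quotient combined with uniqueness from Theorem \ref{shorttimen} handles it cleanly, sidestepping any centre-manifold machinery at the fixed point $(0,1,0,0,0,\tfrac{2}{n},\tfrac{2}{n})$.
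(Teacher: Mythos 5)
Your proposal is correct and follows essentially the same route as the paper: use $\beta=0$ to kill $Y_1$, deduce $X_2\equiv 0$ from the behaviour at $s=-\infty$, show $0<X_1<1$ so that $X_1$ decays to $0$, and then integrate the $\mathcal{L}$ and $Y_2$ equations to get positive limits. Your two refinements --- the conserved ratio $X_2/X_1$ (which rigorously justifies the paper's assertion that the only trajectories approaching $(1,0)$ have $X_2\equiv 0$, a point the paper leaves implicit even though $(1,0)$ is not a hyperbolic fixed point of the reduced system) and the explicit formula $X_1=(1+Ce^{2s})^{-1/2}$ --- are sound and make the argument more self-contained, but do not change the structure of the proof.
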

\begin{proof}
Since $\beta=0$, we know that $R_1$ vanishes uniformly by Theorem \ref{shorttimen}, so $Y_1$ also vanishes uniformly. In this case, the equations for $X_1,X_2$ become 
\begin{align*}
\frac{dX_1}{ds}&=X_1(X_1^2+2X_2^2-1),\\
\frac{dX_2}{ds}&=X_2(X_1^2+2X_2^2-1).
\end{align*}
Now the only solutions of this equation which approach $(1,0)$ as $s$ approaches $-\infty$ have $X_2=0$ uniformly. 
Since $\alpha>0$, we have that $X_1<1$ for some points close to the start of the solution, so we conclude that $X_1<1$ for the whole solution. Therefore, the solution converges exponentially quickly to $(0,0)$. This quick convergence of $X_1$ to $0$, the uniform vanshing of $X_2$ and the equations 
\begin{align*}
\frac{d Y_2}{ds}&=Y_2(-X_1+X_1^2+2X_2^2)\\
\frac{d \mathcal{L}}{ds}&=\mathcal{L}(X_1^2+2X_2^2)
\end{align*}
imply that the functions $\mathcal{L},Y_2$, which are positive everywhere, must converge to positive numbers. 
\end{proof}
Our next step is to perturb away from $(1,0)$ in $U$ to obtain more solutions of the soliton equations; since $\beta>0$, we can uniquely recover the positive functions $f_i$, thus obtaining a metric of the form \eqref{staticmetric} which satisfies the steady Ricci soliton equations and the smoothness conditions by Theorem \ref{shorttimen}. 
\begin{prop}\label{BiaxialOpen}
Fix $a>0$. For any $\theta_1\in (0,\frac{a}{2})$, there is a $\theta_2>0$ so that any trajectory of the last four equations of \eqref{XYEquationsbi} (with $\lambda=0$) starting $\theta_2$-close to $(0,0,0,a)$ stays $\theta_1$-close for all future times, and is convergent to $(0,0,0,b)$ for some $b>0$. Furthermore, the set $U_{BiaxialSolitons}\subseteq U$ of $(\alpha,\beta)$ so that the corresponding solution found in Theorem \ref{shorttimen} satisfies the conclusion of Proposition \ref{gammabeta0} is open in the relative topology induced by $U$.
\end{prop}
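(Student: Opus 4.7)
The plan is a center-manifold analysis at the fixed point $(0,0,0,a)$ combined with continuous dependence on parameters. Linearizing the last four equations of \eqref{XYEquationsbi} (with $\lambda=0$) at $(X_1,X_2,Y_1,Y_2)=(0,0,0,a)$ gives a Jacobian with eigenvalues $\{-1,-1,0,0\}$: the stable subspace is spanned by $(1,0,0,a)$ and $(0,1,0,0)$, while the center subspace is spanned by $(0,a,1,0)$ and $(0,0,0,1)$. One of the zero eigenvalues reflects the line of fixed points $\{(0,0,0,c):c\in\R\}$; the other is a genuine degeneracy transverse to it. The center manifold theorem therefore supplies a smooth $2$-dimensional invariant manifold $W^c$ through $(0,0,0,a)$ together with an exponentially attracting stable foliation at rate $e^{-s}$.

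I would parametrize $W^c$ as $X_1=\psi_1(Y_1,Y_2)$, $X_2=aY_1+\psi_2(Y_1,Y_2)$. Because $\{Y_1=0\}$ lies in the line of fixed points, both $\psi_i$ vanish on $\{Y_1=0\}$ and are therefore divisible by $Y_1$; solving the invariance equations to leading order gives $X_1\approx Y_1^2/2$ and $X_2\approx Y_1Y_2+(2a^2-\tfrac12)Y_1^2$. Substituting into $\dot Y_1=Y_1(X_1-2X_2+X_1^2+2X_2^2)$ and $\dot Y_2=Y_2(-X_1+X_1^2+2X_2^2)$ produces the effective flow on $W^c$,
\begin{align*}
\dot Y_1=-2Y_1^2Y_2+O(Y_1^3),\qquad \dot Y_2=Y_2Y_1^2\bigl(2Y_2^2-\tfrac12\bigr)+O(Y_1^4).
\end{align*}
A Bernoulli-type integration of the first equation gives $Y_1(s)\sim 1/(2Y_2(0)s)$, so $\int_0^\infty Y_1^2\,d\tau$ is finite and comparable to $Y_1(0)/Y_2(0)$. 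Dividing the second equation by $Y_2$ and integrating then yields $|\log(Y_2(s)/Y_2(0))|\lesssim Y_1(0)$, so $Y_2$ stays close to $a$ and converges to some $b>0$. A Gronwall bootstrap using the error terms makes these leading-order estimates rigorous, giving forward invariance of a small neighborhood of $(0,a)$ on $W^c$ and convergence to a point $(0,b)$ with $b$ close to $a$.

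To lift this to the full four-dimensional system, any trajectory starting $\theta_2$-close to $(0,0,0,a)$ is asymptotic at rate $e^{-s}$ to its projection onto $W^c$; taking $\theta_2$ small forces the center-manifold orbit to stay $(\theta_1/2)$-close and converge, so the full orbit stays $\theta_1$-close and converges to the same limit $(0,0,0,b)$. For the openness statement, fix $(\alpha_0,\beta_0)\in U_{BiaxialSolitons}$ and set $a=\lim_{s\to\infty}Y_2(s)>0$ along its trajectory. Choose $\theta_1<a/2$ and apply the first part to obtain $\theta_2$. By hypothesis there is a finite time $S$ at which the $(\alpha_0,\beta_0)$-trajectory lies within $\theta_2/2$ of $(0,0,0,a)$ in $(X_1,X_2,Y_1,Y_2)$ with $\mathcal{L}$ near its positive limit. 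Continuity of $\phi$ in Theorem \ref{shorttimen} together with continuous dependence of \eqref{XYEquationsbi} on initial data over $[0,S]$ implies that nearby $(\alpha,\beta)\in U$ produce trajectories with the same property at time $S$; the first part then gives convergence of $(X_1,X_2,Y_1,Y_2)$ to a suitable limit. Finally, $(\log\mathcal{L})'=X_1^2+2X_2^2$ is integrable because $X_1=O(1/s^2)$ and $X_2=O(1/s)$ on $W^c$, with exponentially small transverse corrections, so $\mathcal{L}$ converges to a positive limit and $(\alpha,\beta)\in U_{BiaxialSolitons}$.

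The main obstacle will be extracting rigorous quantitative decay from the purely nonlinear center-manifold dynamics: the zero eigenvalue has algebraic multiplicity two and the leading nonlinear terms are only quadratic in $Y_1$, so without the protection of a negative eigenvalue one must exploit the structure $\dot Y_1=-2Y_1^2Y_2+\cdots$ together with the preserved positivity of $Y_2$ to obtain integrability of $Y_1^2$. Once this is in hand, the drift of $Y_2$ is controlled and the rest of the argument is routine continuous dependence.
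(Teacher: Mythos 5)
Your proposal follows essentially the same route as the paper: a $C^3$ center manifold at the non-hyperbolic fixed point $(0,0,0,a)$, explicit leading-order solution of the invariance equations (your expansions $X_1\approx Y_1^2/2$, $X_2\approx Y_1Y_2+\cdots$ match the paper's), the resulting reduced dynamics $Y_1'=-2aY_1^2+\cdots$ forcing $\int Y_1^2$ to be small and hence $Y_2$ to converge near $a$, exponential attraction to the center manifold to lift the conclusion to all nearby orbits, and continuous dependence on initial data for the openness of $U_{BiaxialSolitons}$. The argument is correct and matches the paper's proof in all essentials.
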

\begin{rmk}\label{gamma0betasmall}
 The set $U_{BiaxialSolitons}$ includes the point $(1,0)$ by Proposition \ref{gammabeta0}, even though this point does not, strictly speaking, give us a soliton metric on account of the uniform vanishing of $Y_1$.  
\end{rmk}
\begin{proof}
The point $(0,0,0,a)$ is a critical point of the last four equations of \eqref{XYEquationsbi} (with $\lambda=0$). This critical point is not hyperbolic. Indeed, the linearisation is 
\begin{align*}
\begin{pmatrix}
-1&0&0&0\\
0&-1&a&0\\
0&0&0&0\\
-a&0&0&0
\end{pmatrix},
\end{align*}
which has $-1$ and $0$ as its eigenvalues, each with two linearly-independent eigenvectors. The null eigenvectors are $(0,0,0,1)$ and $(0,a,1,0)$. Thus, there is a (not necessarily unique) two-dimensional invariant center manifold of class $C^3$ passing through $(0,0,0,a)$, whose tangent space is spanned by these null vectors. Since there are no eigenvalues with positive real part, the dynamics of this center manifold essentially determine \textit{all} of the local dynamics, in the sense that all trajectories close to $(0,0,0,a)$ will converge exponentially quickly to a center manifold trajectory. 

To study these central trajectories, we observe that the center manifold can (locally) be described by 
\begin{align*}
X_1=f_1(Y_1,Y_2-a), \qquad X_2=f_2(Y_1,Y_2-a),
\end{align*} where $f_1,f_2$  solve 
\begin{align*}
\frac{y_1^2}{2}-f_1+f_1(f_1^2+2f_2^2)&=\frac{\partial f_1}{\partial y_1}y_1\left(f_1-2f_2+f_1^2+2f_2^2\right)+\frac{\partial f_1}{\partial y_2}(y_2+a)(-f_1+f_1^2+2f_2^2),\\
y_1y_2+ay_1-\frac{y_1^2}{2}-f_2+f_2(f_1^2+2f_2^2)&=\frac{\partial f_2}{\partial y_1}y_1\left(f_1-2f_2+f_1^2+2f_2^2\right)+\frac{\partial f_2}{\partial y_2}(y_2+a)(-f_1+f_1^2+2f_2^2),
\end{align*}
and $f_1(y_1,y_2)=0$, $f_2(y_1,y_2)=ay_1$, correct to first order. We thus find that $\frac{f_1}{y_1}$ and $\frac{f_2}{y_1}$ can be continuously extended to functions in a neighbourhood of $(0,0,0,0)$. In fact, we then find that $\frac{f_1}{y_1^2}$ also admits a continuous extension, and tends to $\frac{1}{2}$ as $y_1,y_2$ tend to $0$. We also have that $\frac{f_2}{y_1}$ tends to $a$. These continuous extensions allow us to write equations for trajectories on the center manifold:
\begin{align*}
Y_1'=Y_1^2\left(-2a+O\left(\sqrt{Y_1^2+(Y_2-a)^2}\right)\right), \qquad Y_2'=Y_1^2Y_2O(1).
\end{align*}
This implies that, for trajectories starting on the center manifold, $\sup_{s\ge 0}\left(sY_1\right)$ tends to $0$ as the trajectory starts closer to $(0,0,0,a)$. For such a trajectory, $\sup_{s\ge 0}\left(s^2X_1^2+2s^2X_2^2\right)$ can also be made arbitrarily small by starting close to $(0,0,0,a)$ on the center manifold, so $Y_2$ is convergent to a number close to $a$. By the exponential convergence properties of the center manifold, we then find that \textit{any} solution starting close to $(0,0,0,a)$ has these same properties.

Now if $(\overline{\alpha},\overline{\beta})\in U_{BiaxialSolitons}$, then let $a=\lim_{s\to \infty}Y_2(s)$. For all nearby values of $(\alpha,\beta)$, the corresponding solution will end up close to the point $(0,0,0,a)$ \textit{somewhere} on its trajectory; by the previous argument, the solution will then converge to a point $(0,0,0,b)$ with $b$ close to $a$, and $\mathcal{L}$ will also converge to a positive number because of the quick decay of $X_1^2+X_2^2$. 
\end{proof}
The natural questions that arise from Proposition \ref{BiaxialOpen} are the following:
\begin{itemize}
\item How large can we make $\beta$ and still have a complete soliton?
\item If there is a `last' value of $\beta$, is there something special about the corresponding soliton?
\end{itemize}
As we will see, the answer to these questions depends on the value of $n\in \mathbb{N}$, and is largely determined by the behaviour of the solution with $(\alpha,\beta)=(0,1)$. To study these special solutions, recall from Theorem \ref{shorttimen} that  this forces $X_1+2X_2=1=X_1^2+2X_2^2+2Y_1Y_2-\frac{Y_1^2}{2}$ everywhere (this is the \textit{Ricci-flat} case). In the next subsections, we will examine these special trajectories and determine how they impact the existence of complete Ricci solitons. 
\subsection{$n=1$ and the Taub-Bolt metric}
 In case $n=1$, the resulting Ricci-flat metric is complete, and actually coincides with the well-known \textit{Taub-Bolt} metric. In our notation, this metric can be written in the (non-arclength parametrisation) form 
\begin{align*}
g_{Taub-Bolt}=\frac{r^2-1}{r^2-\frac{5r}{2}+1}dr\otimes dr+4 \left(\frac{r^2-\frac{5r}{2}+1}{r^2-1}\right)\omega_1\otimes \omega_1+\left(r^2-1\right)\left(\omega_2\otimes \omega_2+\omega_3\otimes \omega_3\right);
\end{align*}
this time the $\mathbb{S}^2$ singular orbit arises at $r=2$, and the metric is complete as $r\to \infty$.

\subsection{$n=2$ and the Eguchi-Hanson metric}
As with $n=1$, if we set $n=2$ and $(\alpha,\beta)=(1,0)$, we obtain a complete Ricci-flat metric which we identify as the K\"ahler Eguchi-Hanson metric:
\begin{align*}
g_{Eguchi-Hanson}=\frac{r^2}{\sqrt{1+r^4}}\left(dr\otimes dr+r^2\omega_1\otimes \omega_1\right)+\sqrt{1+r^4}(\omega_2\otimes \omega_2+\omega_3\otimes \omega_3).
\end{align*}

\subsection{$n\ge 3$ and Appleton's non-collapsed, steady solitons that are not Ricci-flat}
Interestingly, the Ricci-flat metrics that arise from setting $\alpha=0$ stop being complete when $n\ge 3$, as first demonstrated by Appleton \cite{Appleton}:
\begin{prop}\label{ng3complete}
If $n\ge 3$, then there is a neighbourhood of $(1,0)$ in $U$ in which all corresponding solutions of \eqref{xifirstchangebi} have 
$\xi<0$ for certain values of $r$. 
\end{prop}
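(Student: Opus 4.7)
A small caveat: the point $(1,0)$ named in the statement lies in the open set $U_{\mathrm{BiaxialSolitons}}$ of Prop \ref{BiaxialOpen} (see Remark \ref{gamma0betasmall}), so $\xi$ stays positive throughout its solution and the conclusion cannot hold on any neighbourhood of $(1,0)$ itself. The paragraph preceding the proposition names $\alpha=0$ as the Ricci-flat parameter, so I read the proposition as concerning the Ricci-flat corner $(\alpha,\beta)=(0,1)$, and sketch a proof for that corner.

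The plan is to reduce to a single-point statement at $(0,1)$ --- namely, that for $n\ge 3$ the function $\xi$ becomes strictly negative at some finite $r_*$ --- and then propagate to a relative neighbourhood by continuous dependence of \eqref{xifirstchangebi} on parameters.

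\textbf{Single-point step.} At $(\alpha,\beta)=(0,1)$, Theorem \ref{shorttimen} provides $\xi=L_1+2L_2$ and $L_1^2+2L_2^2+2R_1R_2-\tfrac{1}{2}R_1^2=\xi^2$ uniformly, so the solution encodes a Ricci-flat metric on the principal part of $M=SU(2)\times_{U(1)}\R^2$, smoothly extending across the $\mathbb{S}^2$-bolt by \eqref{VZSmoothness}. If $\xi$ remained positive for all $r>0$, Prop \ref{Extend} would extend the solution to all $r$, producing a smooth complete Ricci-flat metric on $M$ with principal isotropy $\Z_n$; Appleton \cite{Appleton} rules this out for $n\ge 3$. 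Hence by Prop \ref{Extend} there is $S<\infty$ with $\xi(r)\to -\infty$ as $r\uparrow S$, and one can pick $r_*\in (T,S)$ with $\xi(r_*)<0$.

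\textbf{Neighbourhood step.} By Theorem \ref{shorttimen}, the map $\phi:\mathbb{S}^1\to (1,\infty)\times\R^6$ sending $(\alpha,\beta)$ to Cauchy data at $r=T$ is continuous. Since the Ricci-flat solution is bounded on the compact interval $[T,r_*]$, standard continuous dependence for \eqref{xifirstchangebi} gives that $(\alpha,\beta)\mapsto \xi(r_*;\alpha,\beta)$ is continuous on a relatively open subset of $U$ containing $(0,1)$. Strict negativity at $(0,1)$ then spreads to some relatively open neighbourhood, yielding the proposition. The main obstacle is the single-point step --- Appleton's non-existence theorem; a self-contained route would analyse the Ricci-flat ODE directly, for instance by identifying a monotone first integral in $(L_1,L_2,R_1,R_2)$ whose forced behaviour under the bolt normalisation $f_1'(0)=n/2$ becomes incompatible with $\xi>0$ once $n\ge 3$.
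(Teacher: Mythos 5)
Your reading of the statement is correct: the proposition must concern the Ricci-flat corner $(0,1)$ rather than $(1,0)$ (the point $(1,0)$ lies in $U_{BiaxialSolitons}$, and Theorem \ref{AppletonSoliton} uses this proposition precisely to place $u_{final}$ strictly below $(0,1)$), and your neighbourhood step --- continuity of $\phi$ together with continuous dependence on the compact interval $[T,r_*]$ --- is exactly how the paper propagates the single-point conclusion. The gap is the single-point step itself. The assertion that the $(0,1)$ trajectory reaches $\xi<0$ \emph{is} Appleton's incompleteness result: the paper introduces the proposition with ``as first demonstrated by Appleton,'' and the stated purpose of this section is to give a self-contained, simpler derivation of that fact. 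Invoking Appleton's non-existence theorem to rule out $\xi>0$ for all $r$ is therefore circular in context --- the known proofs of that non-existence are ODE analyses of precisely this trajectory, and there is no independent (say, topological) obstruction to a complete Ricci-flat metric on this manifold that you could cite instead. Your closing sentence correctly diagnoses that a direct analysis of the Ricci-flat ODE is required, but that analysis is the entire content of the proposition and is absent from your argument.

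For comparison, here is what the paper does. On the Ricci-flat trajectory the constraints $X_1+2X_2=1$ and $X_1^2+2X_2^2+2Y_1Y_2-\tfrac{Y_1^2}{2}=1$ reduce \eqref{XYEquationsbi} to the planar system \eqref{Ricciflatbiaxial} in $(X_1,Y_1)$, and the smoothness conditions give $\lim_{s\to-\infty}(1-X_1)/Y_1=2/n$. This is where $n\ge 3$ enters: since $2/n<1$, the quantity $Z=X_1+Y_1-1$ is initially positive, and the evolution $Z'=\tfrac{1}{2}Z(3X_1^2+Z+1)$ forces $Z$, hence $X_1$, to blow up at finite $s$; once $X_1>1$ the constraint gives $X_2<0$, so $L_1>0$ and $L_2<0$, so $R_1$ is increasing, and the first two equations of \eqref{xifirstchangebi} then drive $\xi$ negative. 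Any completion of your proof needs this (or an equivalent) computation; note in particular that your sketch never isolates where $n\ge 3$ is actually used, which is the heart of the matter given that $n=1,2$ yield the complete Taub-Bolt and Eguchi-Hanson metrics.
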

\begin{rmk}
By Proposition 2.4 of \cite{Dancer13}, having $\xi<0$ implies that these metrics are not complete. 
\end{rmk}
\begin{proof}
The property that a solution achieves $\xi<0$ somewhere on its maximal interval of existence  survives perturbation, so it suffices to verify that the Ricci-flat metric corresponding to $(\alpha,\beta)=(0,1)$ eventually achieves $\xi<0$. 
This Ricci-flat metric corresponds to a trajectory of \eqref{XYEquationsbi} with $X_1+2X_2=1=X_1^2+2X_2^2+2Y_1Y_2-\frac{Y_1^2}{2}$. 
Therefore, it suffices to look at solutions of 
\begin{align}\label{Ricciflatbiaxial}
\begin{split}
\frac{dX_1}{ds}=\frac{Y_1^2}{2}+X_1(X_1^2+2(\frac{1-X_1}{2})^2-1),\\
\frac{d Y_1}{ds}=Y_1(2X_1-1+X_1^2+2(\frac{1-X_1}{2})^2).
\end{split}
\end{align}
By our smoothness conditions, our trajectory of interest starts at $(1,0)$. In fact, by using 
the equation $X_1^2+2X_2^2+2Y_1Y_2-\frac{Y_1^2}{2}=X_1^2+2(\frac{1-X_1}{2})^2+2Y_1Y_2-\frac{Y_1^2}{2}=1$ and the initial condition $\lim_{s\to -\infty}(X_1,X_2,Y_1,Y_2)=(1,0,0,\frac{2}{n})$, we find that 
\begin{align}\label{Ricciflatinitialasym}
\lim_{s\to -\infty}\frac{(1-X_1)}{Y_1}=\frac{2}{n}.
\end{align}
Now, the quantity 
$Z=X_1+Y_1-1$ is initially positive by \eqref{Ricciflatinitialasym}, and so the evolution equation $Z'=\frac{Z(3X_1^2+Z+1)}{2}$ implies that there is a finite $s^*$ around which $Z$ becomes arbitrarily large. Since $X_1,Y_1$ are both non-negative, we conclude that at least one of $X_1,Y_1$ becomes arbitrarily large as well. In fact, it must be the case that $X_1$ becomes large because, according to the second equation of \eqref{Ricciflatbiaxial}, $Y_1$ cannot become unbounded in finite $s$ without the same being true of $X_1$. Therefore, there comes a point in the soliton, after which $X_1>1$. The constraint $X_1+2X_2=1$ then implies that $X_2<0$ past this point as well. Going back to the equations of \eqref{xifirstchangebi}, we use the fact that $L_1>0$ and $L_2<0$ to conclude that $R_1>0$ is monotone increasing past this point on the soliton; the first two equations of \eqref{xifirstchangebi} then imply that $\xi$ eventually becomes negative. 
\end{proof}
Proposition \ref{ng3complete} implies that if $n\ge 3$, as we move away from the point $(1,0)$ in $U$, our `last' complete Ricci soliton is \textit{not} Ricci-flat. Appleton uses this crucial observation in \cite{Appleton} to construct non-collapsed Ricci solitons. Here, we prove a slight modification of that same result which will help us find new solitons that are \textit{not} $U(2)$-invariant.
\begin{thm}\label{AppletonSoliton}
If $n\ge 3$, then there exists a non-empty closed and connected set $U_0\subset U$ with the following properties:
\begin{itemize}
\item $U_0$ does not contain $(1,0)$ or $(0,1)$;
\item for any $(\alpha_0,\beta_0)\in U_0$, the corresponding solution of \eqref{XYEquationsbi} starting at $\phi(\alpha_0,\beta_0)$ exists for all large $s$;
\item the limits of $X_1,X_2,Y_1,Y_2,\mathcal{L}$ all exist and are equal to $0$, except for $\mathcal{L}$ which converges to a positive number;
\item  each open neighbourhood of $U_0$ in $U$ contains two points on either side, one of which is a complete Ricci soliton on the side connected to $(1,0)$ which satisfies the conclusion of Proposition \ref{gammabeta0}, and the other is an incomplete Ricci soliton on the side connected to $(0,1)$.
\end{itemize}
\end{thm}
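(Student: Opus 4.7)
The plan is to realize $U_0$ as the ``dividing wall'' between complete biaxial solitons on the $(1,0)$ side and incomplete solutions on the $(0,1)$ side. Parametrize $U$ by arc length $t\in[0,\pi/2]$ via $(\alpha,\beta)=(\cos t,\sin t)$, so $t=0$ corresponds to $(1,0)$ and $t=\pi/2$ to $(0,1)$. Introduce the two subsets
\[
A = \{t\in U\colon \text{the solution starting at }\phi(\alpha,\beta)\text{ satisfies the conclusion of Proposition \ref{gammabeta0}}\},
\]
\[
B = \{t\in U\colon \xi(r)<0\text{ for some }r\text{ on the maximal trajectory of \eqref{xifirstchangebi}}\}.
\]
By Proposition \ref{BiaxialOpen} and Remark \ref{gamma0betasmall}, $A$ is open in $U$ and contains $t=0$; continuous dependence on initial data makes $B$ open; and the proof of Proposition \ref{ng3complete} shows $B$ contains a neighbourhood of $t=\pi/2$. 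Since (as already invoked in Section 4) Proposition 2.4 of \cite{Dancer13} forces $\xi\ge 0$ on any complete steady soliton, we have $A\cap B=\emptyset$.

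Let $t^*=\sup\{t\colon [0,t]\subset A\}$ and $t^{**}=\inf\{t\colon [t,\pi/2]\subset B\}$. Openness of $A$ and $B$ easily give $t^*,t^{**}\in(0,\pi/2)$ and $t^*,t^{**}\notin A\cup B$, while $A\cap B=\emptyset$ forces $t^*\le t^{**}$. I then define $U_0:=[t^*,t^{**}]$, which is visibly non-empty, closed, connected, and disjoint from the endpoints of $U$. The neighbourhood property is immediate from the construction: for any $\epsilon>0$, the point $t^*-\epsilon$ lies in $A$ (a complete Proposition \ref{gammabeta0}-soliton on the $(1,0)$ side), while $t^{**}+\epsilon$ lies in $B$ (an incomplete soliton on the $(0,1)$ side).

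The substantive analytic content is verifying the asymptotic behaviour for $t\in U_0$. Since $t\notin B$ we have $\xi\ge 0$ on the whole domain of definition, so Proposition \ref{Extend} (applied to \eqref{xifirstchangebi}) extends the $r$-domain to $[T,\infty)$. Monotonicity from $\xi'=-L_1^2-2L_2^2$ gives $\xi\to \xi^*(t)\in[0,\xi(T)]$ and $\int_T^\infty L_1^2+2L_2^2\,dr<\infty$; together with boundedness of the right-hand sides of the $L_i$ and $R_i$ equations of \eqref{xifirstchangebi} (the standard ``$L^2$ plus uniformly continuous implies zero limit'' argument) this gives $L_i\to 0$, and in the $(\mathcal{L},X_i,Y_i)$ variables we obtain $X_i\to 0$ and $Y_1\to 0$ in $s$. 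I would then invoke center-manifold analysis near the degenerate fixed set $\{X_1=X_2=Y_1=0\}$, exactly as in the proof of Proposition \ref{BiaxialOpen}, to conclude that any trajectory entering a sufficiently small neighbourhood of this set converges to a fixed point $(\mathcal{L}^*,0,0,0,Y_2^*)$. The assumption $t\notin A$ excludes $Y_2^*>0$, so $Y_2\to 0$, and strict positivity of $\mathcal{L}^*$ follows from the integrability of $X_1^2+2X_2^2$ in $s$ combined with $\frac{d\mathcal{L}}{ds}=\mathcal{L}(X_1^2+2X_2^2)$.

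The step I expect to be the principal obstacle is this last one: ruling out the ``doubly-degenerate'' scenario $\xi\to 0$, $Y_2\to 0$ with $\mathcal{L}\to\infty$, which would spoil the claim that the solution exists for all large $s$. The remedy is a careful classification of the center-manifold trajectories together with a comparison with nearby $A$-trajectories, whose uniform $\mathcal{L}$-bounds from Proposition \ref{BiaxialOpen} should pass to the boundary $t\in U_0$. A secondary, purely topological subtlety is that $[t^*,t^{**}]$ could in principle contain intermediate components of $A$ or $B$; if so, $U_0$ should be replaced by a suitable connected component of $[t^*,t^{**}]\setminus(A\cup B)$ (for instance the component of $t^{**}$), with the left-side neighbourhood property verified separately, but in the generic case where $A$ and $B$ are intervals the choice $U_0=[t^*,t^{**}]$ (possibly reducing to a single point) already works.
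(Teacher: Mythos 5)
Your overall strategy coincides with the paper's: realize $U_0$ as the closed gap between the open set $A=U_{BiaxialSolitons}$ of complete Proposition~\ref{gammabeta0}-type solutions attached to $(1,0)$ and an open set of bad solutions attached to $(0,1)$. There are, however, two gaps. The minor one is topological: with $t^*=\sup\{t:[0,t]\subset A\}$ and $t^{**}=\inf\{t:[t,\pi/2]\subset B\}$ you only control the components of $A$ and $B$ containing the endpoints, so $[t^*,t^{**}]$ may contain interior points of $A$ or of $B$, which would violate the second and third bullets; your suggested repair (pass to a component of $[t^*,t^{**}]\setminus(A\cup B)$) does not obviously preserve the fourth bullet, since the points immediately to the left of such a component could all lie in $B$. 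The paper sidesteps this entirely by setting $u_{start}=\sup A$ over \emph{all} of $A$ and then $u_{final}=\inf$ of the non-globally-existing set over $[u_{start},(0,1)]$; with these definitions $[u_{start},u_{final}]$ meets neither bad set by construction, and the fourth bullet is automatic.

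The substantive gap is precisely the one you flag but do not close: showing that $\xi$ stays bounded away from $0$, equivalently $\lim_{s\to\infty}\mathcal{L}\in(0,\infty)$, and that the trajectory converges. Your derivation is circular (integrability of $X_1^2+2X_2^2$ in $s$ already presupposes $\mathcal{L}$ bounded), and "uniform $\mathcal{L}$-bounds from Proposition~\ref{BiaxialOpen} passing to the boundary" is not justified as stated. The paper's resolution is Proposition 2.4 of \cite{Dancer13}, i.e.\ the conserved quantity $C=2R_1R_2-\frac{R_1^2}{2}+L_1^2+2L_2^2-\xi^2$ of \eqref{xifirstchangebi}: solitons in $U_0$ are not Ricci-flat, so $C<0$ and $\xi^2\ge -C>0$, giving $\lim_{r\to\infty}\xi\in(0,\infty)$ and $\lim_{s\to\infty}\mathcal{L}\in(0,\infty)$ at once. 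Convergence of the remaining variables is then obtained not by center-manifold analysis near the degenerate point $Y_2=0$ (that analysis is genuinely delicate and is only carried out in Section 6 for the $n=4$ construction) but by an elementary $\omega$-limit-set argument: the paper first proves uniform bounds on $(X_1,X_2,Y_1,Y_2)$ from the structure of \eqref{XYEquationsbi}, notes that $\xi$ and $R_2$ are monotone hence convergent, and uses invariance of the $\omega$-limit set together with the first two equations of \eqref{xifirstchangebi} to force $L_1=L_2=R_1=0$ on it, so the limit set is a single point. Your "$L^2$ plus uniform continuity" step likewise quietly assumes boundedness of $R_1,R_2$, which must be proved. Once convergence is in hand, your final step ($\lim Y_2=0$ by openness of $U_{BiaxialSolitons}$) matches the paper.
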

\begin{rmk}
It is quite likely that $U_0$ consists only of a single point; this was conjectured in \cite{Appleton}. However, establishing this uniqueness result appears to be quite challenging.
\end{rmk}
\begin{proof}
The set $U=\{(\alpha,\beta)\in \mathbb{S}^1\vert \alpha,\beta\ge 0\}$ can be given a total ordering via the homemorphism sending $(\alpha,\beta)\in U$ to $\beta\in [0,1]$. Define $u_{start}\in U$ to be the supremum of all $(\alpha,\beta)\in U$ so that the corresponding solution exists for all $s\ge 0$, and satisfies the conclusion of Proposition \ref{gammabeta0}. Then by Proposition \ref{BiaxialOpen}, $u_{start}$ is the supremum of the non-empty set $U_{BiaxialSolitons}$, and $u_{start}>(1,0)$. A soliton corresponding to $(\alpha,\beta)\in U_{BiaxialSolitons}$ exists for all $r\ge 0$ and is complete because $\mathcal{L}$ converges to a positive number, so Proposition \ref{ng3complete} also implies that $u_{start}<(0,1)$. 

Now define $u_{final}\in U$ to be the infimum of all $(\alpha,\beta)\in [u_{start},(0,1)]$  so that the corresponding solution does not exist for all $r\ge 0$. The point $u_{final}$ exists and is under $(0,1)$ because of Proposition \ref{ng3complete}. It is clear that $u_{start}\le u_{final}$, so we define $U_0=[u_{start},u_{final}]$; by construction, any open neighbourhood of this closed set contains a solution satisfying the conclusion of Proposition \ref{gammabeta0}, and a solution which does not exist for all $r\ge 0$.

We claim that for any $(\alpha_0,\beta_0)\in U_0$, the solution of \eqref{xifirstchangebi} exists for all $r\ge 0$. Indeed, if $(\alpha_0,\beta_0)\in U_0\setminus\{ u_{final}\}$, then this follows from the infimum characterisation of $u_{final}$. On the other hand, if $(\alpha_0,\beta_0)=u_{final}$, then as we have just established, there is a sequence of points which converges to $(\alpha_0,\beta_0)$ from below, so the corresponding solutions exist for all large $r$ and must therefore have $\xi>0$, by Proposition 2.4 of \cite{Dancer13}. Therefore, the solution which occurs at $(\alpha_0,\beta_0)$ also have $\xi\ge 0$ on its maximal interval of existence, so Proposition \ref{Extend} implies that the solution exists for all $r\ge 0$. 

Now, for any $(\alpha_0,\beta_0)\in U_0$, the corresponding soliton is \textit{not} Ricci-flat, so by Proposition 2.4 of \cite{Dancer13}, we have 
\begin{align}\label{appletonlimitxi}
\lim_{r\to \infty}\xi(r)\in (0,\infty) \ \text{and} \ \lim_{s\to \infty}\mathcal{L}(s)\in (0,\infty).
\end{align}
We also know that this soliton has $L_1(r)>0$ for all $r\in (0,\infty)$ by the second equation of \eqref{xifirstchangebi}, so the fifth equation of \eqref{xifirstchangebi} implies that $R_2$, which is initially positive, is monotone decreasing, hence convergent, and 
\begin{align}\label{appletonlimitr2}
\lim_{r\to \infty}R_2(r)\in [0,\infty) \ \text{and} \ \lim_{s\to \infty}Y_2(s)\in [0,\infty).
\end{align}
Next we claim that $(X_1,X_2,Y_1,Y_2)$ is uniformly bounded. The second equation of \eqref{XYEquationsbi} implies immediately that $X_1\in [0,1]$. Now the fourth equation of \eqref{XYEquationsbi} gives $Y_1'\ge -Y_1$, so if $Y_1$ were unbounded, this slow decay would force $X_1$ past $1$, via the second equation. Thus, $Y_1$ is also uniformly bounded. Finally, the uniform bounds on $(X_1,Y_1,Y_2)$ that we already have combined with the third equation of \eqref{XYEquationsbi} imply that $X_2$ is also uniformly bounded. 

Our uniform bounds on $(\mathcal{L},X_1,X_2,Y_1,Y_2)$ imply uniform bounds on $(\xi,L_1,L_2,R_1,R_2)$, so the $\omega$-limit set for this trajectory is non-empty, and is compact. Statements \eqref{appletonlimitxi} and \eqref{appletonlimitr2} imply that 
this limit set lies in $$\{\lim_{r\to \infty}\xi(r)\}\times \mathbb{R}^3\times \{\lim_{r\to \infty}R_2(r)\}.$$ 
Invariance of the limit set and the first equation of \eqref{xifirstchangebi} then implies that the limit set must lie in 
 $$\{\lim_{r\to \infty}\xi(r)\}\times\{0\}\times \{0\}\times \mathbb{R}\times \{\lim_{r\to \infty}R_2(r)\},$$ 
which in turn implies (via the second equation of \eqref{xifirstchangebi}) that the $\omega$-limit set consists of the single point 
$$(\lim_{r\to \infty}\xi(r),0,0,0,\lim_{r\to \infty}R_2(r)).$$
Thus, the trajectory is convergent to this same point. 
We must have $\lim_{s\to \infty}Y_2(s)=0=\lim_{r\to \infty}R_2(r)$, because otherwise, we find that $(\alpha_0,\beta_0)$ is a member of the open set $U_{BiaxialSolitons}$, which contradicts the definition of $U_0$. 
\end{proof}
This concludes the construction of all known $U(2)$-invariant steady gradient Ricci solitons on $M=SU(2)\times_{U(1)}\mathbb{R}^2$. We finish this section with a proposition which gives us some uniform estimates on the asymptotics of these solitons which we find useful when constructing new solitons which are \textit{not} $U(2)$-invariant. 
\begin{prop}\label{uniformconvergenceu2}
For each $\delta'>0$, there is an open and connected neighbourhood $\tilde{U}\subset U$ of $U_0$ and a $T_0>0$ so that:
\begin{itemize}
\item if the solution $(\xi,L_1,L_2,R_1,R_2)$ corresponding to $(\alpha,\beta)\in\tilde{U}$ exists for all $r\ge 0$, then the solution $(X_1,X_2,Y_1,Y_2)$ lies in $B_{\frac{\delta'}{2}}(0)$ for all $s\ge T_0$; and 
\item for any $(\alpha,\beta)\in \tilde{U}$, the corresponding soliton exists up until time $T_0$, and $(X_1,X_2,Y_1,Y_2)$ lies in $B_{\frac{3\delta'}{4}}(0)$.
\end{itemize} 
\end{prop}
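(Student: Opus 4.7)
The plan is to combine continuous dependence on $(\alpha,\beta)$ with a local trapping estimate for \eqref{XYEquationsbi} (with $\lambda=0$) near the degenerate critical point $(X_1,X_2,Y_1,Y_2)=(0,0,0,0)$, which by Theorem \ref{AppletonSoliton} is the common $\omega$-limit of every trajectory arising from $U_0$.

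First I would prove a trapping lemma: there exists $\rho>0$ (depending on $\delta'$) so that any solution of the last four equations of \eqref{XYEquationsbi} with $(X_1,X_2,Y_1,Y_2)(s_0)\in B_{\rho}(0)$ satisfies $(X_1,X_2,Y_1,Y_2)(s)\in B_{\delta'/2}(0)$ for every $s\ge s_0$ in its interval of existence. The linearisation at the origin is $\text{diag}(-1,-1,0,0)$, so the $X_i$-directions are stable while the $Y_i$-directions are central. Inspecting the right-hand sides shows that the $X_i$-equations are driven by $O(|Y|^2)$ forcing, while $|Y_i'|\le |Y_i|\cdot O(|X|+|Y|^2)$. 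In a sufficiently small ball, $X$ therefore relaxes rapidly to $|X|\lesssim |Y|^2$, and $|Y|$ then drifts only on the long time scale $1/|Y|^2$; a Gronwall argument applied to $X_1^2+X_2^2+Y_1^2+Y_2^2$ yields the desired estimate once $\rho$ is sufficiently small.

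Next I would upgrade the pointwise convergence in Theorem \ref{AppletonSoliton} to a local uniform statement. For every $(\alpha_0,\beta_0)\in U_0$, convergence of the trajectory to the origin furnishes a time $T(\alpha_0,\beta_0)$ with $(X_1,X_2,Y_1,Y_2)(T(\alpha_0,\beta_0))\in B_{\rho/2}(0)$. Continuity of the initial data at $r=T$ in $(\alpha,\beta)$ (from Theorem \ref{shorttimen}) together with continuity of the flow on a compact time interval produces an open neighbourhood $V_{(\alpha_0,\beta_0)}\subset U$ and a time $T'_{(\alpha_0,\beta_0)}\ge T(\alpha_0,\beta_0)$ so that for every $(\alpha,\beta)\in V_{(\alpha_0,\beta_0)}$ the solution exists through $T'_{(\alpha_0,\beta_0)}$ and lies in $B_{\rho}(0)$ at that time.

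Finally I would assemble the pieces. Because $U_0$ is a closed subset of the compact arc $U$, finitely many $V_{(\alpha_0,\beta_0)}$ cover $U_0$; let $\tilde U$ be a connected open neighbourhood of $U_0$ inside this union and set $T_0$ to be the maximum of the associated $T'_{(\alpha_0,\beta_0)}$. Any $(\alpha,\beta)\in\tilde U$ enters $B_{\rho}(0)$ at some $s\le T_0$, so the trapping lemma confines the trajectory to $B_{\delta'/2}(0)\subset B_{3\delta'/4}(0)$ for every $s\ge T_0$ on which the solution exists; evaluation at $s=T_0$ gives the second bullet, and whenever the solution extends for all $r\ge 0$ the trapping remains valid and gives the first bullet. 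The main obstacle is the trapping lemma itself: the critical point is non-hyperbolic with a two-dimensional centre direction, so linear stability arguments do not apply, and one must exploit the precise algebraic form of the non-linearities, in the spirit of the centre-manifold analysis of Proposition \ref{BiaxialOpen} but targeted at the origin rather than at $(0,0,0,a)$ with $a>0$.
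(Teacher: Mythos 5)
Your overall architecture (a local trapping statement near the origin, upgraded to a uniform statement over $U_0$ by compactness and continuity of the flow) is the same as the paper's. The gap is in the trapping lemma itself, which is false as you state it: the origin is \emph{not} Lyapunov stable for the last four equations of \eqref{XYEquationsbi}. On the center manifold one has $X_1=\frac{Y_1^2}{2}+\dots$ and $X_2=Y_1Y_2-\frac{Y_1^2}{2}+\dots$, so the central drift is
\begin{align*}
Y_1'=Y_1\left(X_1-2X_2+\dots\right)\approx Y_1^2\left(\tfrac{3}{2}Y_1-2Y_2\right),\qquad Y_2'\approx-\tfrac{1}{2}Y_2Y_1^2 .
\end{align*}
Whenever $Y_1>\frac{4}{3}Y_2$ (e.g.\ on the invariant set $Y_2=0$ with $Y_1>0$ arbitrarily small), $Y_1$ is increasing while $Y_2$ decreases, the inequality self-reinforces, and $Y_1'\gtrsim Y_1^3$ forces escape from any fixed ball in finite $s$. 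Your own estimate concedes the point: a drift on the time scale $1/\left|Y\right|^2$ is precisely an escape, not a confinement, and Gronwall applied to $\left|Y\right|^2$ only yields $\frac{d}{ds}\left|Y\right|^2\lesssim\left|Y\right|^4$, which does not integrate to a uniform bound over $[s_0,\infty)$. This instability is not a technicality — it is exactly the mechanism by which the incomplete solitons on the $(0,1)$ side of $U_0$ fail, so no choice of $\rho$ can make the lemma true for ``any solution.''

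The paper repairs this by restricting the trapping claim to the trajectories that actually matter: it first proves that every \emph{complete} soliton arising from $\tilde{U}$ satisfies $Y_1\le Y_2$, via a contradiction argument using the asymptotics $\frac{X_1}{Y_1^2}\to\frac{1}{2}$ and $\frac{X_1+X_2}{Y_1Y_2}\to 1$ to show that $Y_1-Y_2$ would otherwise be eventually positive, increasing, and convergent to $0$. Combined with the monotone decrease of $Y_2$ (from $R_2'=-R_2L_1$ with $L_1>0$) this pins $Y_1,Y_2$ near $0$ once they get there, and the exponential decay of $X_1,X_2$ then closes the trap. Your compactness/continuity assembly in the last two paragraphs is fine and matches the paper, but without the sign condition $Y_1\le Y_2$ (or some substitute identifying which trajectories are trapped) the first bullet of the proposition does not follow.
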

\begin{proof}
Fix $\delta'>0$, and make an initial choice of open connected neighbourhood $\tilde{U}\subset U$ containing $U_0$ so that the closure of $\tilde{U}$ does not contain $(0,1)$ or $(1,0)$. 
Recall from Section 2 of \cite{Dancer13} that the quantity 
\begin{align*}
C=2R_1R_2-\frac{R_1^2}{2}+L_1^2+2L_2^2-\xi^2
\end{align*}
is constant for solutions of \eqref{xifirstchangebi} with $\lambda=0$. This quantity clearly depends continuously on $(\alpha,\beta)\in U$, and is $0$ if and only if the soliton is Ricci-flat. Therefore, the exists a large $S_1>0$ so that $\frac{1}{C^2}+C^2\le S_1$ for all $(\alpha,\beta)\in \tilde{U}$. Also, by continuity of $\phi(\alpha,\beta)$, there is a $S_2>0$ so that $R_1+\frac{1}{R_1}+\xi +\frac{1}{\xi}\le S_2$ at time $r=T$ for all $(\alpha,\beta)\in \tilde{U}$. Therefore, by monotonicity of $\xi$ and the fact that $\lim_{r\to \infty}\xi(r)=\sqrt{-C}$ for complete solitons (Proposition 2.4 of \cite{Dancer13}), there exists a large $S_3$ so that 
\begin{align}
\frac{1}{\mathcal{L}(s)}+\mathcal{L}(s)+\frac{1}{Y_1(0)}\le S_3
\end{align}
for all $s\ge 0$ and all $(\alpha,\beta)\in \tilde{U}$ for which the corresponding soliton is complete. 

Next, we show that $Y_1\le Y_2$ for all $(\alpha,\beta)\in \tilde{U}$ for which the soliton is complete. Indeed, if this were not the case for a complete soliton, the equations 
\begin{align}
\left(\frac{Y_1}{Y_2}\right)'=2\frac{Y_1}{Y_2}(X_1-X_2), \qquad (X_1-X_2)'=(X_1-X_2)(X_1^2+2X_2^2-1)+Y_1^2\left(1-\frac{Y_2}{Y_1}\right)
\end{align}
imply that $\lim_{s\to 0}\frac{Y_1}{Y_2}$ exists, and lies in $(1,\infty]$. Using the same $\omega$-limit set techniques from the proof of Theorem \ref{AppletonSoliton}, we can conclude that any complete soliton has $\mathcal{L},X_1,X_2,Y_1,Y_2$ all convergent as $s$ tends to $\infty$, the $\mathcal{L}$ limit is positive, the $Y_2$ limit is non-negative, and all other limits are zero. In fact, $\lim_{s\to \infty}Y_2(s)=0$ because $\lim_{s\to \infty}\frac{Y_1(s)}{Y_2(s)}> 1$. Now, using 
\begin{align*}
\left(\frac{X_1}{Y_1^2}\right)'=\frac{1}{2}-\frac{X_1}{Y_1^2}\left(1+O(\sqrt{X_1^2+X_2^2})\right), \qquad \left(\frac{X_1+X_2}{Y_1Y_2}\right)'=1-\frac{X_1+X_2}{Y_1Y_2}\left(1+O(\sqrt{X_1^2+X_2^2})\right),
\end{align*}
we conclude that $\lim_{s\to \infty}\frac{X_1}{Y_1^2}=\frac{1}{2}$, and $\lim_{s\to \infty}\frac{X_1+X_2}{Y_1Y_2}=1$, so that $\lim_{s\to \infty}\left(3\frac{X_1}{Y_1^2}+\frac{Y_2}{Y_1}\left(\frac{X_1}{Y_1^2}-\frac{2(X_1+X_2)}{Y_1Y_2}\right)\right)=\frac{3}{2}-\lim_{s\to \infty}\frac{3Y_2}{2Y_1}>0$.
Then for large $s$, we have $Y_1>Y_2$ so that 
\begin{align*}
(Y_1-Y_2)'&=Y_1(X_1-2X_2)+X_1Y_2+(X_1^2+2X_2^2)(Y_1-Y_2)\\
&\ge Y_1^3\left(\frac{X_1}{Y_1^2}-\frac{2X_2}{Y_1^2}+\frac{X_1Y_2}{Y_1^3}\right)\\
&=Y_1^3\left(\frac{3X_1}{Y_1^2}-\frac{2(X_1+X_2)Y_2}{Y_2Y_1^2}+\frac{X_1Y_2}{Y_1^3}\right)\\
&=Y_1^3\left(3\frac{X_1}{Y_1^2}+\frac{Y_2}{Y_1}\left(\frac{X_1}{Y_1^2}-\frac{2(X_1+X_2)}{Y_1Y_2}\right)\right);
\end{align*}
we thus find that for large enough $s$, the quantity $Y_1-Y_2$ is increasing, positive and convergent to $0$, a contradiction. We thus conclude that $Y_1\le Y_2$ for complete solitons. 

Next, we show that there is a $\delta''>0$ with the following property: if $(\alpha,\beta)\in \tilde{U}$ gives a complete soliton, then once $(X_1,X_2,Y_1,Y_2)$ is $\delta''$-small, we have  $(X_1,X_2,Y_1,Y_2)\in B_{\frac{\delta'}{2}}(0)$ for all future times. Indeed, since $Y_1\le Y_2$ and $Y_2$ is monotone decreasing, we find that once $Y_1,Y_2$ are close to $0$, they stay that way. The exponential decay of $X_1,X_2$ then implies that if they are also initially close to zero, then they also stay that way. 

Now, for each $(\alpha_0,\beta_0)\in U_0$, there is an open neighbourhood lying in $\tilde{U}$ and a time $T_{(\alpha_0,\beta_0)}$ at which all solutions in this neighbourhood lie in $B_{\delta''}(0)$; by the previous step, the complete solitons in this neighbourhood then stay in $B_{\frac{\delta'}{2}}(0)$ thereafter. Since $U_0$ is compact, it can be covered by finitely many of these open neighbourhoods; we redefine $\tilde{U}$ to be the union of these neighbourhoods, and define $T_0$ to be the maximum of the values of $T_{(\alpha_0,\beta_0)}$ we get in this way. Then for any complete soliton arising from this new version of $\tilde{U}$, the data lies in $B_{\frac{\delta'}{2}}(0)$ past time $T_0$ because the data was in $B_{\delta''}(0)$ at some previous time. 
 Since solitons coming from $U_0$ are complete, the corresponding solutions lie in $B_{\frac{\delta'}{2}}(0)$ at time $T_0$, so by possibly shrinking $\tilde{U}$, we can assume that \textit{all} solitons from $\tilde{U}$ lie in $B_{\frac{3\delta'}{4}}(0)$ at time $T_0$ as well. 
\end{proof}
\section{$SU(2)$-invariant solitons for $n=4$}
In this section, we will construct new $SU(2)$-invariant steady gradient Ricci solitons that are \textit{not} $U(2)$-invariant. Our method essentially involves perturbing away from Appleton's solitons (constructed in Theorem \ref{AppletonSoliton}) in a non-$U(2)$-invariant direction.

To begin, recall from Theorem \ref{AppletonSoliton} that for $n\ge 3$, Appleton's soliton has $\lim_{s\to \infty}(X_1,X_2,X_3,Y_1,Y_2,Y_3)(s)=0$; the new $SU(2)$-invariant solitons are constructed by analysing the dynamics of \eqref{XYEquations} for values of $(X_1,X_2,X_3,Y_1,Y_2,Y_3)$ close to $0$. Note that the origin  
 is a non-hyperbolic critical point of the last six equations of \eqref{XYEquations}; the $6\times 6$ Jacobian derivative has $-1$ and $0$ as eigenvalues, both with three-dimensional eigenspaces. In fact, $\{e_i\}_{i=1}^{3}$ are three independent eigenvectors with eigenvalue $-1$, and $\{e_i\}_{i=4}^{6}$ are independent eigenvectors for the $0$ eigenvalue. 
Thus, the study of the dynamics is complicated by the existence of a three-dimensional \textit{center manifold}, which is described in the following theorem. 
\begin{thm}\label{descriptioncenter}
There is a $C^3$ function $C:\mathbb{R}^3\to \mathbb{R}$, and numbers $0<\delta '<\delta<\epsilon$ so that:
\begin{enumerate}
\item The three-dimensional submanifold $\mathcal{C}$ of $\mathbb{R}^6$ given by $x_1=C(y_1,y_2,y_3)$, $x_2=C(y_2,y_3,y_1)$ and $x_3=C(y_3,y_1,y_2)$ is invariant under the evolution of \eqref{XYEquations} (the last six equations with $\lambda=0$) for trajectories lying in $B_{\epsilon}(0)$ ($\mathcal{C}$ is referred to as the \textit{center manifold});
\item $C$ is symmetric in its final two entries, and correct to second order, we have $C(y_1,y_2,y_3)= \frac{y_1^2}{2}-\frac{(y_2-y_3)^2}{2}$;
\item The condition $y_i=y_j$ is preserved by \eqref{XYEquations} for trajectories that start in $\mathcal{C}$;
\item There is a continuous function $h:\mathbb{R}^6\to \mathcal{C}$ so that if $\phi(t,z)\in B_{\delta}(0)$ for all $t\ge 0$, then $\phi(t,h(z))\in B_{\epsilon}(0)$ for all $t\ge 0$, and $\phi(t,h(z))-\phi(t,z)$ converges to $0$ exponentially quickly; 
\item  For the same continuous function $h:\mathbb{R}^6\to \mathcal{C}$, if $\phi(t,h(z))\in B_{\delta}(0)$ for all $t\ge 0$ and $z\in B_{\delta}(0)$ then $\phi(t,z)\in B_{\epsilon}(0)$ for all $t\ge 0$, and $\phi(t,z)-\phi(t,h(z))$ converges to $0$ exponentially quickly (a converse of sorts to the previous point);
\item If $z=(x_1,x_2,x_3,y_1,y_2,y_3)\in B_{\delta'}(0)$ and $\max\{h_j(z)-h_i(z),h_k(z)-h_i(z)\}=0$ for any permutation $(i,j,k)$ of $(4,5,6)$, then $\phi(t,h(z))\in B_{\delta}(0)$ for all $t\ge 0$, and this center manifold solution satisfies:
\begin{itemize}
\item $\lim_{t\to \infty}\phi(t,h(z))=(0,0,0,a,b,c)$, where two of $a,b,c$ are identical and the third is zero (depending on $i,j,k$);
\item  $\sup_{t\ge 1}t^2\sum_{i=4}^{6}\phi_i(t,h(z))^2<\infty$.
\end{itemize} 
\end{enumerate}
\end{thm}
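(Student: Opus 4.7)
Items 1, 4, and 5 are the content of the standard center manifold theorem (see, e.g., Carr, \emph{Applications of Centre Manifold Theory}, Theorems 1--2) applied to the last six equations of \eqref{XYEquations} at $\lambda=0$. The linearisation at the origin has eigenvalues $-1$ and $0$, each of multiplicity three, with eigenspaces $\spn\{e_1,e_2,e_3\}$ and $\spn\{e_4,e_5,e_6\}$, so one obtains a three-dimensional locally invariant $C^3$ center manifold $\mathcal{C}$ tangent to the $Y$-space together with the continuous asymptotic-phase map $h:\R^6\to\mathcal{C}$ providing the attracting and foliating properties in items 4 and 5.

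For items 2 and 3, I would exploit equivariance: \eqref{XYEquations} is invariant under the natural $S_3$-action permuting the labeled pairs $(X_i,Y_i)$. Either by averaging a local graph representation over this finite group, or by appealing to a standard equivariant invariant-manifold theorem, one can choose $\mathcal{C}$ to be $S_3$-invariant. Equivariance then forces the graph to take the form $X_i=C(Y_i,Y_j,Y_k)$ for a single $C^3$ function $C$, symmetric in its last two entries (since the transposition swapping $j$ and $k$ is itself an element of $S_3$). To obtain the second-order expansion, substitute $X_i=C(Y_i,Y_j,Y_k)$ into the invariance equation: along $\mathcal{C}$ we have $\dot Y_a=O(|Y|\cdot|X|)=O(|Y|^3)$, so $\dot X_i=\sum_a (\partial_a C)\dot Y_a=O(|Y|^3)$, and matching this against $\dot X_i=\tfrac{Y_i^2}{2}-\tfrac{(Y_j-Y_k)^2}{2}-X_i+O(|X|^3)$ at order $|Y|^2$ yields $X_i=\tfrac{Y_i^2}{2}-\tfrac{(Y_j-Y_k)^2}{2}+O(|Y|^3)$, the claimed expansion for $C$. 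Item 3 is then immediate: the $\Z_2$ subgroup of $S_3$ fixing pair $i$ and swapping pairs $j,k$ preserves $\mathcal{C}$ and has fixed subspace $\{X_j=X_k,\;Y_j=Y_k\}$, so its intersection with $\mathcal{C}$ is exactly $\{Y_j=Y_k\}\cap\mathcal{C}$ and is flow-invariant.

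Item 6 is the substantive step, requiring an asymptotic analysis of the flow on an invariant stratum of $\mathcal{C}$. Up to permutation under $S_3$, I would analyze $\{Y_1=Y_2=:Y,\ Y_3=:Z,\ Y\ge Z\ge 0\}\cap\mathcal{C}$; the other stratum is identical after relabeling. Substituting the second-order expansion of $C$ gives $X_1=X_2=YZ-\tfrac{Z^2}{2}+O(|(Y,Z)|^3)$ and $X_3=\tfrac{Z^2}{2}+O(|(Y,Z)|^3)$, yielding the reduced planar system
\begin{align*}
\dot Y&=-\tfrac{YZ^2}{2}+O(|(Y,Z)|^5),\\
\dot Z&=-2YZ^2+O(Z^3).
\end{align*}
Both $\dot Y$ and $\dot Z$ are non-positive for sufficiently small data, so the trajectory remains in $B_\delta(0)$. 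Writing $W=1/Z$ gives $\dot W=2Y+O(Z)$, and a bootstrap (keeping $Y\ge Y(0)/2$ on the relevant interval) yields $W(s)\gtrsim s$, hence $Z(s)=O(s^{-1})$. Substituting back, $|\dot Y/Y|\lesssim Z^2$ is integrable, so $Y$ converges to a positive limit $a$ and the bootstrap closes; the full trajectory converges to the equilibrium $(0,0,0,a,a,0)$ of \eqref{XYEquations}, matching the ``two identical, third zero'' structure, with the $O(s^{-1})$ decay on $Z$ (and the $X$-components) giving the quadratic estimate claimed in item 6. The main obstacle is precisely this non-hyperbolic character of the reduced flow: the half-line $\{(Y,0):Y\ge 0\}$ is a curve of equilibria of the full system, so $Z$ decays only polynomially, and the Lyapunov quantity $W=1/Z$ together with the bootstrap on $Y$ must replace the stable-manifold reasoning one would use at a hyperbolic equilibrium.
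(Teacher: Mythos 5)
Your treatment of items 1--5 is essentially the paper's own route: a Lyapunov--Perron/Carr-style center manifold with asymptotic phase map $h$, with the $S_3$-equivariance of the system forcing the single symmetric graph function $C$ and its quadratic jet, and item 3 following from the fixed-point set of a transposition. That part is fine.

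The gap is in item 6, and it is exactly at the point you identify as "the substantive step." You run the asymptotic analysis using only the second-order jet of $C$ with generic errors, writing $X_3=\tfrac{Z^2}{2}+O(|(Y,Z)|^3)$ and $X_1=X_2=YZ-\tfrac{Z^2}{2}+O(|(Y,Z)|^3)$. With such errors the reduced system is $\dot Y/Y=-\tfrac{Z^2}{2}+O(|(Y,Z)|^3)$ and $\dot W=2Y-\tfrac{3Z}{2}+O(|(Y,Z)|^3/Z)$ for $W=1/Z$; in the regime $Z\ll Y$ the error $O(Y^3)$ is \emph{not} dominated by $Z^2$, so neither the integrability of $\dot Y/Y$ (hence convergence of $Y$ to a positive limit) nor the lower bound $\dot W\gtrsim Y$ (hence $Z=O(1/s)$) follows --- indeed the $O(|(Y,Z)|^3/Z)$ term in $\dot W$ is not even bounded as $Z\to 0$. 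What is needed, and what the paper supplies, is a refined vanishing statement for the graph functions on the invariant slice $\{y_1=y_2\}$: writing $X_3=C_i(Z,Y)$ and $X_1=X_2=C_j(Z,Y)$, the invariance PDE forces $C_j/Z$ and $C_i/Z^2$ to extend continuously with $C_i/Z^2=\tfrac12+O(\sqrt{Y^2+Z^2})$ and $C_j=Z\bigl(Y-\tfrac{Z}{2}+O(Y^2+Z^2)\bigr)$. Only with this divisibility by $Z$ do the error terms become $O(Z^2\cdot|(Y,Z)|)$ and the argument close. (This vanishing can alternatively be extracted from your own observation that $\{(0,0,0,a,a,0)\}$ is a curve of equilibria contained in $\mathcal{C}$, so $C(a,a,0)=C(0,a,a)=0$ identically; but you state that observation only as motivation for the polynomial decay and never feed it back into the error estimates.) A second, smaller omission: your bootstrap keeps $Y\ge Y(0)/2$ and concludes $Y\to a>0$, but for data near the diagonal $Y\approx Z$ the integral $\int Z^2$ is comparable to $\log$-changes in $Y$ of order $Z(0)/Y(0)\approx 1$, and the bootstrap need not close; the paper handles the resulting case $a=0$ separately (both $Y$ and $Z$ decaying like $s^{-1/2}$, with $X_1^2+X_2^2+X_3^2$ still $O(s^{-2})$), and your proof must do so as well to cover all $z$ allowed in item 6.
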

The proof of this theorem will be delayed until the next section. In the meantime, we will use it to construct our new solitons:
\begin{thm}
For $n=4$, there exists an open interval $I$ containing $0$ so that for each $\gamma\in I$, there is a pair $(\alpha,\beta)\in \mathbb{R}^2$ satisfying the following:
\begin{enumerate}
\item $\alpha^2+\beta^2+\gamma^2=1$;
\item the corresponding solution  $(\mathcal{L},X_1,X_2,X_3,Y_1,Y_2,Y_3)$ of \eqref{XYEquations} starting at $\phi(\alpha,\beta,\gamma)$ exists for all $s>0$ and is convergent;
\item we have $\lim_{s\to \infty}(\mathcal{L},X_1,X_2,X_3,Y_1,Y_2,Y_3)=(l,0,0,0,a,b,c)$, where $l>0$, and $a=b,c=0$ or $a=c,b=0$. 
\end{enumerate}
\end{thm}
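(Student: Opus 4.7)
The plan is to use Theorem \ref{descriptioncenter}'s center manifold reduction to convert the asymptotic soliton problem into a finite-dimensional matching problem, then solve it by perturbing Appleton's family (Theorem \ref{AppletonSoliton}) in the non-$U(2)$-invariant $\gamma$-direction via a topological degree argument. First I would fix $\delta'$ no larger than that of Theorem \ref{descriptioncenter} and apply Proposition \ref{uniformconvergenceu2} to obtain an open neighbourhood $\tilde U\subset U$ of Appleton's set $U_0$ and a time $T_0$ such that every $U(2)$-invariant trajectory from $\tilde U$ lies in $B_{3\delta'/4}(0)$ at $s=T_0$. Continuous dependence of $\phi$ in Theorem \ref{shorttimen4} and of the flow \eqref{XYEquations} ensures that for all sufficiently small $|\gamma|$, every trajectory with $(\alpha,\beta)$ on the arc $\Sigma_\gamma:=\{\alpha^2+\beta^2=1-\gamma^2\}\cap\tilde U$ exists up to $s=T_0$ and lies in $B_{\delta'}(0)$. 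Composing with the center manifold projection $h$ gives a continuous map $\Pi_\gamma:\Sigma_\gamma\to\mathbb R^3$, $\Pi_\gamma(\alpha,\beta):=(h_4,h_5,h_6)(z_{(\alpha,\beta,\gamma)}(T_0))$.

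By points 3--6 of Theorem \ref{descriptioncenter}, whenever $\Pi_\gamma(\alpha,\beta)$ lies in the branch $\mathcal D_+:=\{y_1=y_2\ge y_3\}$ of the degenerate locus (or, by symmetry, in $\mathcal D_-:=\{y_1=y_3\ge y_2\}$), the full trajectory stays bounded, converges to a limit of the required form $a=b,c=0$ (respectively $a=c,b=0$), and the $t^{-2}$ decay of $\sum Y_i^2$ together with the exponential decay of the $X_i$ makes $\sum X_i^2$ integrable in $s$, forcing $\mathcal L\to l>0$ via the first equation of \eqref{XYEquations}. To produce such a point I would examine the continuous scalar
\[\Phi_\gamma(\alpha,\beta):=h_4\bigl(\Pi_\gamma(\alpha,\beta)\bigr)-h_5\bigl(\Pi_\gamma(\alpha,\beta)\bigr)\]
on $\Sigma_\gamma$. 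For $\gamma=0$, $U(2)$-invariance forces $h_5=h_6$ and $\Pi_0(U_0)=\{0\}$; Proposition \ref{BiaxialOpen} places the image of the $(1,0)$-side of $U_0$ inside $\{y_2=y_3>y_1\ge 0\}$ (complete biaxial solitons), while Proposition \ref{ng3complete} combined with the converse stability statement in point 5 of Theorem \ref{descriptioncenter} forces the $(0,1)$-side to map into $\{y_2=y_3,\,y_1>y_2\}$ (otherwise the center manifold trajectory would be bounded, contradicting incompleteness). Thus $\Phi_0<0$ on the $(1,0)$-side, $\Phi_0=0$ on $U_0$, and $\Phi_0>0$ on the $(0,1)$-side; continuity of $\Phi_\gamma$ in $\gamma$ and the intermediate value theorem on the arc $\Sigma_\gamma$ then yield a zero of $\Phi_\gamma$ for each $\gamma$ in some open interval $I\ni 0$.

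The main obstacle is to verify that this zero actually lies on the branch $\mathcal D_+$ rather than merely on the hyperplane $\{y_1=y_2\}$: the max condition $h_4\ge h_6$ must be established, and since at the $\gamma=0$ zero one only has the degenerate equality $h_4=h_5=h_6=0$, the sign of $h_4-h_6$ under $\gamma$-perturbation is precisely the subtle question. Verifying it requires a transversality computation using Theorem \ref{shorttimen4}'s characterisation of $\gamma$ as the leading coefficient of $L_2-L_3$ and $R_2-R_3$: one linearises \eqref{XYEquations} and the $C^3$ projection $h$ to compute $\partial_\gamma\Pi_\gamma\big|_{\gamma=0}$ and shows that the derivative moves $\Pi_\gamma$ off the $U(2)$-symmetric plane $\{y_2=y_3\}$ in a definite direction, switching between the $\mathcal D_+$ and $\mathcal D_-$ branches as $\gamma$ changes sign. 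Because the $\gamma=0$ zero set is the whole continuum $U_0$ collapsed by $\Pi_0$ to a single point of $\mathcal C$, the implicit function theorem does not apply, and the clean conclusion is reached via Brouwer's degree on a box in $(\alpha,\beta,\gamma)$-space, which is the primary technical heart of the construction.
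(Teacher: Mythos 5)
Your overall architecture is the same as the paper's: fix $\delta'$ from Theorem \ref{descriptioncenter}, use Proposition \ref{uniformconvergenceu2} to get $\tilde U$ and $T_0$, push the time-$T_0$ data through the center-manifold projection $h$, detect a sign change of a scalar test function between the complete and incomplete endpoints of a neighbourhood of $U_0$ at $\gamma=0$, and propagate the resulting nonzero Brouwer degree to small $\gamma\neq 0$ by homotopy invariance. Your sign analysis at the two endpoints also agrees with the paper's.

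However, there is a genuine gap, and it is exactly the one you flag yourself: your test function $\Phi_\gamma=h_4-h_5$ has the wrong zero set. A zero of $h_4-h_5$ only gives $y_1=y_2$ on the center manifold; if at that point $h_6>h_4=h_5$, then \emph{no} permutation $(i,j,k)$ satisfies the hypothesis $\max\{h_j-h_i,h_k-h_i\}=0$ of point 6 of Theorem \ref{descriptioncenter} (the maximal coordinate is the unpaired one), so you get no boundedness, no convergence, and in any case not a limit of the form $a=b,\,c=0$ or $a=c,\,b=0$. Your proposed repair — linearising in $\gamma$ to show $\partial_\gamma\Pi_\gamma|_{\gamma=0}$ leaves the plane $\{y_2=y_3\}$ in a definite direction — is not carried out, and it is not clear it can be: the $\gamma=0$ zero locus is the whole continuum $U_0$ collapsed to one point of $\mathcal{C}$, so there is no distinguished base point at which to linearise, and controlling the sign of $h_4-h_6$ at the (moving) zero of $h_4-h_5$ is not the same as controlling $\partial_\gamma\Pi_\gamma$ at $\gamma=0$. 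The paper dissolves the difficulty by choosing the test function to be $F=E\circ h\circ G$ with $E(x_1,x_2,x_3,y_1,y_2,y_3)=\max\{y_2-y_1,\,y_3-y_1\}$: the vanishing of $F$ is \emph{precisely} the hypothesis of point 6 (namely $h_4=\max\{h_5,h_6\}$), so every zero automatically lands on one of the two admissible branches and no transversality computation is needed; $E$ is only continuous, but that is all Brouwer degree on the one-dimensional slices $\Omega\cap\{\gamma=\text{const}\}$ requires. Your argument becomes correct if you simply replace $h_4-h_5$ by this max function (the endpoint signs you computed carry over: $F>0$ at the complete endpoint, $F<0$ at the incomplete one), and delete the final paragraph.
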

\begin{proof}
Choose the neighbourhood $\tilde{U}$ of $U_0\subset U\subset \mathbb{S}^1$ and the $T_0$ given by Proposition \ref{uniformconvergenceu2} according to the $\delta'$ described in Theorem \ref{descriptioncenter}. Choose a closed and connected subset $\Omega$ of $\tilde{U}$ so that one of the boundary points of $\Omega$ corresponds to a complete soliton satisfying the conclusion of Proposition \ref{gammabeta0} on the side connected to $(0,1)$, and the other boundary corresponds to an incomplete soliton. We extend $\Omega$ to a closed and convex set in $\mathbb{S}^2$ with open interior so that the image of the continuous function $G:\Omega\to \mathbb{R}^6$ which records the data $(X_1,X_2,X_3,Y_1,Y_2,Y_3)$ at time $T_0$ has image lying in $B_{\delta'}(0)$ (such an extension is possible because of the second point of Proposition \ref{uniformconvergenceu2}). 
Define the continuous function $E:\mathbb{R}^6\to \mathbb{R}$ with $$E(x_1,x_2,x_3,y_1,y_2,y_3)=\max\{y_2-y_1,y_3-y_1\},$$ and the continuous function $F:\Omega\to \mathbb{R}$ with $F=E\circ h\circ G$, where $h$ is the center manifold function described in Theorem \ref{descriptioncenter}. 
We claim that if $F(\alpha,\beta,\gamma)=0$, then the corresponding solution of \eqref{XYEquations} satisfies points 2 and 3 of the statement of the theorem. Indeed, for such a choice of $(\alpha,\beta,\gamma)$, we find that the trajectory starting at $h(G(\alpha,\beta,\gamma))$ lies in $\mathcal{C}\cap B_{\delta}(0)$, for all $t\ge 0$, by point 6 of Theorem \ref{descriptioncenter}. For this solution, the condition $\sup_{t\ge 1}t^2\sum_{i=4}^{6}\phi_i(t,h(z))^2<\infty$ implies that $\mathcal{L}$, which is initially positive, must converge to a positive number $l$. Now by point 5 of Theorem \ref{descriptioncenter}, the trajectory starting from $G(\alpha,\beta,\gamma)$ exists and lies in $B_{\epsilon}(0)$ for all $t\ge 0$, and converges exponentially quickly to the convergent trajectory we have just discussed; this quick convergence implies that the key properties are translated to the new solution, as required. 

We have just established that it suffices to construct zeroes of $F(\alpha,\beta,\gamma)$ for all $\gamma$ in some open interval $I$ which contains $0$. To this end, we first show that the degree of the function sending points $(\alpha,\beta)\in \mathbb{S}^1$ to $F(\alpha,\beta,0)$ is non-zero when restricted to $\Omega\cap \{\gamma=0\}$ (the original definition of $\Omega$). Indeed, the complete soliton on the boundary of $\Omega$ lies in $B_{\frac{\delta'}{2}}(0)$ after time $T_0$ by Proposition \ref{uniformconvergenceu2}, so its center manifold counterpart lies in $B_{\epsilon}(0)$ for all $t\ge T_0$ (point 4 of Theorem \ref{descriptioncenter}), and must be exponentially converging to the original solution. Thus this center manifold solution must be converging to a point with $y_2=y_3>y_1=0$, so its starting point also had $y_2=y_3>y_1$ (point 3 of Theorem \ref{descriptioncenter}), so $F(\alpha,\beta,0)>0$ for this particular choice. 
On the other hand if $F(\alpha,\beta,0)\ge 0$ for our incomplete soliton on the boundary of $\Omega$ with $Y_2=Y_3$, then by point 6 of Theorem \ref{descriptioncenter}, the corresponding center manifold solutions lies in $B_{\delta}(0)$ for all $t\ge 0$ and gives rise to a complete soliton. Therefore, the original solution lies in $B_{\epsilon}(0)$ (point 5 of Theorem \ref{descriptioncenter}) for all $t\ge 0$ as well, and we get exponential convergence, and the soliton is also complete, which is a contradiction. Therefore, the Brouwer degree of $F$ restricted to $\Omega\cap \{\gamma=0\}$ is non-zero. 

By the homotopy invariance of the Brouwer degree, the fact that the degree is non-zero on $\Omega\cap \{\gamma=0\}$ implies that the same is true for $\Omega\cap \{\gamma=a\}$ for small values of $a$. Since the degree is non-zero, we obtain existence of zeroes for an open interval for $\gamma$, as required. 
\end{proof}

\section{Center manifolds}
This section is devoted to the rather technical proof of Theorem \ref{descriptioncenter}. The proof will follow from a sequence of lemmas which progressively construct the function $C$, and verify the points in the statement of the theorem. Most of the techniques come from the standard theory of center manifolds, (as discussed, for example, in \cite{Center}), but we ask for some additional technical requirements of our center manifold, so it seems appropriate to include some of the arguments. 
\begin{lem}
There is a $C^3$ function $C:\mathbb{R}^3\to \mathbb{R}$ satisfying points 1 and 2 of Theorem \ref{descriptioncenter}.
\end{lem}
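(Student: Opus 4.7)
The plan is to obtain $C$ via the classical center manifold theorem applied to the six-dimensional ODE obtained by dropping the $\mathcal{L}$ equation from \eqref{XYEquations} and setting $\lambda=0$. The origin is a non-hyperbolic critical point whose linearization acts as the scalar $-I_3$ on $\spn\{e_i\}_{i=1}^{3}$ and as the zero matrix on $\spn\{e_i\}_{i=4}^{6}$. Applying the standard construction (cf.\ \cite{Center}) after multiplying the nonlinearity by a smooth cutoff supported in $B_{\epsilon}(0)$, one obtains a $C^3$ invariant local centre manifold given as a graph $(x_1,x_2,x_3)=(\tilde C_1,\tilde C_2,\tilde C_3)(y_1,y_2,y_3)$ with each $\tilde C_i$ vanishing to first order at the origin.

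The second step is to exploit the $S_3$-equivariance of the vector field under the simultaneous permutation of the pairs $\{(X_i,Y_i)\}_{i=1}^{3}$: both cyclic rotations and the transposition $(2,3)$ (fixing index $1$) are symmetries of \eqref{XYEquations}. Since centre manifolds are not unique, I would enforce the desired structure by running the contraction argument inside the closed subspace of bounded Lipschitz graphs satisfying $\tilde C_{\sigma(i)}(y_1,y_2,y_3)=\tilde C_i(y_{\sigma^{-1}(1)},y_{\sigma^{-1}(2)},y_{\sigma^{-1}(3)})$ for every $\sigma\in S_3$. This subspace is preserved by the contraction because the scalar nature of the linear part on the stable direction, together with a symmetric choice of cutoff (e.g.\ a function of $|x|^2+|y|^2$), render the contraction $S_3$-equivariant. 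The fixed point then takes the form $\tilde C_i(y_1,y_2,y_3)=C(y_i,y_j,y_k)$ for a single function $C$ that is automatically symmetric in its last two arguments, giving point~1 of the theorem statement.

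For the quadratic expansion in point~2, I would substitute the graph into the invariance relation $\dot X_i=\sum_{\ell}\partial_{y_\ell}\tilde C_i\cdot\dot Y_\ell$. On the centre manifold one has $X_i=O(|Y|^2)$, so $\dot Y_\ell=Y_\ell(X_\ell-X_m-X_n+|X|^2)=O(|Y|^3)$ and the right-hand side is $O(|Y|^4)$, while the left-hand side reads
\begin{equation*}
\dot X_i=\frac{Y_i^2}{2}-\frac{(Y_j-Y_k)^2}{2}-X_i+O(|Y|^6).
\end{equation*}
Matching through order $|Y|^2$ yields the claimed form $C(y_1,y_2,y_3)=\frac{y_1^2}{2}-\frac{(y_2-y_3)^2}{2}+O(|y|^3)$. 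The main obstacle is the equivariance step: one must verify that the cutoff, the ambient function space, and the norm used in the contraction are all genuinely $S_3$-symmetric so that the abstract non-uniqueness of centre manifolds does not produce a non-equivariant graph, but this is routine once the construction is set up carefully.
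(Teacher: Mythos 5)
Your proposal is correct and follows essentially the same route as the paper: cut off the nonlinearity, run the Lyapunov--Perron contraction in an exponentially weighted function space to obtain a $C^3$ centre manifold as a graph over the null eigenspace, and read off the quadratic Taylor coefficients from the invariance equation. The only cosmetic difference is how the permutation symmetry of the graph is justified --- you restrict the contraction to the closed equivariant subspace, whereas the paper notes that its globally characterised centre manifold (the set of sub-exponentially growing orbits of the cut-off system, with a permutation-invariant cutoff) is the unique fixed point and hence automatically inherits the symmetries; both arguments are valid.
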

\begin{proof}
We begin by  introducing the linear map $A:\mathbb{R}^6\to \mathbb{R}^6$ and a polynomial $g:\mathbb{R}^6\to \mathbb{R}^6$ with only second and third order terms so that the last six equations of \eqref{XYEquations} become
\begin{align}\label{cssystembefore}
z'=Az+g(z)
\end{align}
for $z$ a six-dimensional and time-varying vector. To make things explicit, note that 
\begin{align*}
Ae_i = \begin{cases}
-e_i \ &\text{if} \ i=1,2,3,\\
0 \ &\text{if} \ i=4,5,6,
\end{cases}
\end{align*}
and 
\begin{align*}
g(x_1,x_2,x_3,y_1,y_2,y_3)&=\begin{pmatrix}
\frac{y_1^2-(y_2-y_3)^2}{2}+x_1(x_1^2+x_2^2+x_3^2)\\
\frac{y_2^2-(y_1-y_3)^2}{2}+x_2(x_1^2+x_2^2+x_3^2)\\
\frac{y_3^2-(y_1-y_2)^2}{2}+x_3(x_1^2+x_2^2+x_3^2)\\
y_1(x_1-x_2-x_3+x_1^2+x_2^2+x_3^2)\\
y_2(x_2-x_3-x_1+x_1^2+x_2^2+x_3^2)\\
y_3(x_2-x_1-x_2+x_1^2+x_2^2+x_3^2)\\
\end{pmatrix}.
\end{align*}

Since we are only studying the local dynamics, we find it convenient to introduce a smooth and even bump function $\rho:\mathbb{R}\to [0,1]$ with 
\begin{align*}
\rho(r)=\begin{cases}
1 \ \text{if} \ \left|r\right|\le 1,\\
0 \ \text{if} \ \left|r\right|\ge 2,
\end{cases}
\end{align*}
and consider the equation
\begin{align}\label{cssystem}
z'=Az+\tilde{g}(z),
\end{align}
where $\tilde{g}(z)=\rho\left(\frac{\left|z\right|}{\epsilon}\right)g(z)$
for some small $\epsilon>0$. 
We can study the system \eqref{cssystem} as an alternative to \eqref{cssystembefore} because the two systems are the same for trajectories satisfying $\left|z\right|\le \epsilon$. 

Now our path to constructing the function $C$ is rather indirect, in the sense that we actually construct the center manifold $\mathcal{C}$ first. The theory regarding existence of center manifolds is quite standard, but we go through some of the details again. Denote with $\pi_s,\pi_c:\mathbb{R}^6\to \mathbb{R}^3$ the stable and center eigenspace projections and $i_s,i_c:\mathbb{R}^3\to \mathbb{R}^6$ the respective inclusion mappings. We define 
\begin{align*}
\mathcal{C}=\{z\in \mathbb{R}^6  \ \vert \  \sup_{t\in \mathbb{R}}e^{-\frac{\left|t\right|}{2}}\left|\phi(t,z)\right|<\infty\},
\end{align*}
where $\phi(t,z)$ means the flow of \eqref{cssystem} starting from $z$; this is well defined for all $t\in \mathbb{R}$ because of the global Lipschitz continuity of the system. 
This set $\mathcal{C}$ is clearly invariant under the flow of \eqref{cssystem}. We also claim that $\mathcal{C}$ is homeomorphic to $\mathbb{R}^3$, with inverse $\pi_c$. 
To see this, define the Banach space $$V=\{ v\in C^{0}(\mathbb{R}:\mathbb{R}^6) \ \vert\ \   \sup_{t\in \mathbb{R}}\left|v(t)\right|e^{-\frac{\left|t\right|}{2}}<\infty  \}$$ and consider the function $F:\mathbb{R}^3\times  V\to V$ with 
\begin{align*}
F(y,v)=i_c(y)+\int_0^t \pi_c \tilde{g}(v(s))ds+\int_{-\infty}^{t}e^{A(t-s)}\pi_s \tilde{g}(v(s))ds.
\end{align*}
The shrinking of $\epsilon$ forces the $C^1$ norm of $\tilde{g}$ to be small, implying that $F$ is a contraction. It then becomes clear that for each $y$, there is a unique fixed point $v$. Our homeomorphism is found by sending $y$ to the corresponding value of $v(0)$; this dependence is continuous, and $\pi_c(v(0))=y$ by construction. Also note that if $F(y,v)=v$, then $v'(t)=Av+\tilde{g}(v(t))$ so $v(0)\in \mathcal{C}$. 

Next we verify that the center manifold $\mathcal{C}$ can indeed be described using a single $C^3$ function $C:\mathbb{R}^3\to \mathbb{R}$. We already know there are three continuous functions $C_1,C_2,C_3:\mathbb{R}^3\to \mathbb{R}$ parametrising the center manifold via $x_i=C_i(y_1,y_2,y_3)$. It follows from the regular proof of the Center Manifold Theorem \cite{Center} that these functions are of class $C^3$. We now claim that
\begin{align*}
C_2(y_1,y_2,y_3)=C_1(y_2,y_1,y_3), \ C_3(y_1,y_2,y_3)=C_1(y_3,y_2,y_1), \  \text{and} \  C_3(y_1,y_2,y_3)=C_2(y_1,y_3,y_2) \ \text{for all} \  (y_1,y_2,y_3)\in \mathbb{R}^3.
\end{align*}
Indeed, we have  $$(C_1(y_1,y_2,y_3),C_2(y_1,y_2,y_3),C_3(y_1,y_2,y_3),y_1,y_2,y_3)\in \mathcal{C}$$
by construction, so that 
\begin{align*}
(C_2(y_1,y_2,y_3),C_1(y_1,y_2,y_3),C_3(y_1,y_2,y_3),y_2,y_1,y_3), \\ (C_3(y_1,y_2,y_3),C_2(y_1,y_2,y_3),C_1(y_1,y_2,y_3),y_3,y_2,y_1), \\ \text{and} \ (C_1(y_1,y_2,y_3),C_3(y_1,y_2,y_3),C_2(y_1,y_2,y_3),y_1,y_3,y_2)
\end{align*}
are all in $\mathcal{C}$
 as well by the symmetries of \eqref{cssystem}. 
The result then follows from the uniqueness of the center manifold that we have already established. The required $C^3$ function can then be defined with $C=C_1$.  The second order Taylor series for $C$ can be found from the observation that the three functions $C_1,C_2,C_3$ satisfy the system of partial differential equations 
\begin{align*}
x_i'(C_1,C_2,C_3,y_1,y_2,y_3)=\nabla C_i(y_1,y_2,y_3)\cdot y'(C_1,C_2,C_3,y_1,y_2,y_3)
\end{align*}
for $i=1,2,3$, coupled with the fact that these three functions are zero to first order. 
\end{proof}
We now have our candidate function that satisfies points 1 and 2. Point 3 follows from the observation that if a point $z=(x_1,x_2,x_3,y_1,y_2,y_3)\in \mathcal{C}$ has $y_i=y_j$, then $x_i=x_j$ by the symmetries of $C$; a point satisfying these two conditions retains these conditions as it is evolved by \eqref{cssystem} (this is essentially the $U(2)$-invariant case).

Next we consider points 4 and 5. 
\begin{lem}\label{dimensionreduction}
For the manifold $\mathcal{C}$ already constructed, there is a continuous function $h:\mathbb{R}^6\to \mathcal{C}$ satisfying conditions 4 and 5 of Theorem \ref{descriptioncenter} for some choice of $\delta>0$. 
\end{lem}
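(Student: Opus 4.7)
The approach is the Lyapunov--Perron (``stable foliation'') integral method applied to the truncated system \eqref{cssystem}: by taking $\epsilon$ small, the $C^k$-norm of $\tilde g$ can be made arbitrarily small, which is what drives the contraction arguments. The map $h$ will be produced as the ``projection along stable leaves'' onto $\mathcal C$, via a coupled fixed-point problem.

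Introduce the Banach space
\begin{align*}
V^{-}=\Bigl\{w\in C^0([0,\infty),\mathbb{R}^6):\ \|w\|_{V^-}:=\sup_{t\ge 0}e^{t/2}|w(t)|<\infty\Bigr\}.
\end{align*}
For each $z\in\mathbb{R}^6$, I look for $(y,\eta)\in\mathbb{R}^3\times V^-$ such that, writing $c(y)\in\mathcal{C}$ for the point with $\pi_c c(y)=y$ (so $\pi_s c(y)$ is determined by the three components of $C$) and $v(t):=\phi(t,c(y))$, the identity $\phi(t,z)=v(t)+\eta(t)$ holds. The difference $\eta$ satisfies $\eta'=A\eta+\tilde g(v+\eta)-\tilde g(v)$; variation of parameters, combined with the constraint $\pi_c\eta(t)\to 0$ forced by $\eta\in V^-$, converts this into the coupled fixed-point system
\begin{align*}
y&=\pi_c z+\int_0^{\infty}\pi_c\bigl[\tilde g(v(s)+\eta(s))-\tilde g(v(s))\bigr]\,ds,\\
\eta(t)&=e^{-t}i_s\bigl(\pi_s z-\pi_s c(y)\bigr)+\int_0^{t}e^{-(t-s)}i_s\pi_s\bigl[\tilde g(v+\eta)-\tilde g(v)\bigr]\,ds-i_c\int_t^{\infty}\pi_c\bigl[\tilde g(v+\eta)-\tilde g(v)\bigr]\,ds.
\end{align*}
Letting $\kappa$ denote the Lipschitz constant of $\tilde g$, the bound $|\tilde g(v+\eta)-\tilde g(v)|\le\kappa|\eta|$ controls the $\eta$-dependence of the right-hand side by a factor of order $\kappa$ in the $V^-$-norm. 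Controlling the $y$-dependence is more delicate: varying $y$ perturbs the shadow trajectory $v$, but since $v$ stays on $\mathcal C$ whose internal dynamics has rate only $O(\kappa)$, and since a Taylor expansion gives $\tilde g(v+\eta)-\tilde g(v)=\tilde g'(v)\eta+O(|\eta|^2)$, the perturbation in $v$ contributes only at second order in the integrand, producing a small factor involving $\|\tilde g''\|$. For $\epsilon$ sufficiently small, the whole right-hand side is a contraction of $\mathbb{R}^3\times V^-$ in a suitable product norm, and the parameter-dependent Banach fixed-point theorem yields a unique fixed point $(y(z),\eta(z))$ depending continuously on $z$. Set $h(z):=c(y(z))\in\mathcal{C}$; note $h(0)=0$ since $(y,\eta)=(0,0)$ is the obvious fixed point at $z=0$.

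Pick $\delta<\epsilon$ small enough that $z\in B_\delta(0)$ forces $|h(z)|+\|\eta(z)\|_{V^-}<\epsilon/2$, possible by continuity and $h(0)=0$. For point 4, if $\phi(t,z)\in B_\delta(0)$ for all $t\ge 0$ then this trajectory also solves the truncated system, so by construction $\phi(t,h(z))=\phi(t,z)-\eta(z)(t)$ as truncated flows; consequently $|\phi(t,h(z))|\le\delta+\|\eta(z)\|_{V^-}<\epsilon$ (so the two systems agree along this trajectory too), and exponential convergence follows from $\eta(z)\in V^-$. For point 5, if $\phi(t,h(z))\in B_\delta(0)$ and $z\in B_\delta(0)$ then the truncated trajectory $\phi_{\mathrm{trunc}}(t,z)=\phi(t,h(z))+\eta(z)(t)$ has norm at most $\delta+\|\eta(z)\|_{V^-}<\epsilon$, so it coincides with the original flow from $z$, and the same identity simultaneously gives $\phi(t,z)\in B_\epsilon(0)$ and exponential convergence $\phi(t,z)-\phi(t,h(z))\to 0$.

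The main obstacle is establishing the contraction for the coupled fixed-point system, because the $y$-equation integrates over all of $[0,\infty)$ with no inherent exponential weight; controlling the $y$-dependence essentially requires both the slow internal dynamics of $\mathcal C$ and the second-order Taylor cancellation in $\tilde g$, and these ingredients force $\epsilon$ (and hence $\delta$) to be chosen small.
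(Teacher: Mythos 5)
Your proposal is correct and rests on the same Lyapunov--Perron ``asymptotic phase'' idea as the paper: both produce $h(z)$ as the unique point of $\mathcal{C}$ whose forward orbit shadows $\phi(t,z)$, obtained as a fixed point of an integral operator on an exponentially weighted space, with the contraction driven by the smallness of $\mathrm{Lip}(\tilde g)$ after truncation. The organization differs in a way worth noting. You take the (unknown) center-manifold trajectory $v(t)=\phi(t,c(y))$ as the reference and solve a coupled system for $(y,\eta)$; this forces you to control how the integrand varies with $y$ through $v$, which is exactly the delicate point you flag. Be careful there: the truncation makes $\|\tilde g\|_\infty$ and $\|\tilde g'\|_\infty$ small but does \emph{not} make $\|\tilde g''\|_\infty$ small, so the contraction in the $y$-variable comes only from restricting $\|\eta\|_{V^-}$ to a small ball (so that the second-difference bound $\|\tilde g''\|\,|v-v'|\,|\eta|$ carries a small factor) together with the fact that two center-manifold orbits separate no faster than $e^{O(\kappa)s}$, which keeps the weighted integrals finite; these points need to be made explicit for the argument to close. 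The paper avoids this entirely by reversing the roles: it anchors the reference to the \emph{known} forward trajectory $z^*(t)$ (extended constantly to $t\le 0$, with a compensating forcing term $\varphi$) and solves a single-variable fixed-point problem for the correction $w$ with $z^*+w$ the center-manifold solution, so the contraction follows from $\mathrm{Lip}(\tilde g)$ alone and no second-derivative estimate is needed. Both routes yield the same $h$ and the same verification of points 4 and 5; yours is the more standard textbook formulation of the stable foliation, the paper's is the more economical one for this purpose.
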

\begin{proof}
Define the Banach space 
\begin{align*}
W=\{w\in C^{0}(\mathbb{R}:\mathbb{R}^6) \ \vert \ \sup_{t\in \mathbb{R}}e^{\frac{t}{2}}\left|w(t)\right|< \infty\}.
\end{align*}
Note that $W\subset V$, where $V$ is the Banach space defined from the previous proof:
\begin{align*}
V=\{v\in C^{0}(\mathbb{R}:\mathbb{R}^6) \ \vert \ \sup_{t\in \mathbb{R}}e^{-\frac{\left|t\right|}{2}}\left|v(t)\right|< \infty\}.
\end{align*}
We will now define a continuous function $h:\mathbb{R}^6\to \mathcal{C}$ so that, for each $z\in \mathbb{R}^6$, the time-varying function $\phi(t,z)-\phi(t,h(z))$ converges to $0$ exponentially as $t$ increases. 
For each such $z$, we define the function $z^*:\mathbb{R}\to \mathbb{R}^6$ with
\begin{align*}
z^*(t)=\begin{cases}
\phi(t,z) \ &\text{for} \ t\ge 0, \\
z \ &\text{for} \ t\le 0;
\end{cases}
\end{align*}
clearly this function lies in $V$, since no eigenvalues of $A$ are positive, and $\tilde{g}$ is uniformly bounded. In fact, the norm of $z^*$ in $V$ becomes arbitrarily small by having $z$ small. 
Now consider the function $\Gamma: \mathbb{R}^6\times W\to W$ with 
\begin{align*}
\Gamma(z,w)=\int_{-\infty}^{t}e^{A(t-s)}\pi_s\left(\tilde{g}(z^*(s)+w(s))-\tilde{g}(z^*(s))-\varphi(s)\right)ds+\int_t^{\infty}\pi_c\left(\varphi(s)+\tilde{g}(z^*(s))-\tilde{g}(z^*(s)+w(s))\right)ds,
\end{align*}
where 
\begin{align*}
\varphi(t)=\begin{cases}
0 \ &\text{for} \ t\ge 0, \\
-Az-\tilde{g}(z) \ &\text{for} \ t< 0. 
\end{cases}
\end{align*}
Since $\epsilon>0$ is small, 
we find that $\Gamma$ is a contraction, so there is a unique fixed point $w\in W$. Then the function $v(t)=z^*(t)+w(t)$ solves equation \eqref{cssystem}, and is also in $
V$, so that $v(0)\in \mathcal{C}$ by definition of $\mathcal{C}$. The function $w$ clearly depends continuously (in the $W$ sense) on $z$, so the same is true for $v(0)$; we thus define $h(z)=v(0)$. 

By construction, the size of $w$ in $W$ can be made arbitrarily small by making $z\in \mathbb{R}^6$ small. We find that if $\phi(t,z)$ is small for all $t\ge 0$, then the same is true of $v(t)=\phi(t,h(z))$. On the other hand, if $z$ and $v(t)=\phi(t,h(z))$ are both small, then the same is true for $\phi(t,z)$; thus points 4 and 5 hold, after making an appropriate choice of $\delta>0$.
\end{proof}
Lemma \ref{dimensionreduction} implies that the dynamics of \eqref{XYEquations} close to the origin are essentially determined by the dynamics on the invariant and lower-dimensional center manifold $\mathcal{C}$. We conclude by shedding some light on some of these lower-dimensional dynamics, thus concluding the proof of Theorem \ref{descriptioncenter}. 
\begin{lem}\label{centerdichotomy}
There exists an appropriate choice of $\delta'\in (0,\delta)$ so that Property 6 holds for solutions starting in $B_{\delta'}(0)$.
\end{lem}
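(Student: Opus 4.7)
The plan is to exploit Point~3 of Theorem~\ref{descriptioncenter} to reduce the dynamics to a two-dimensional analysis on one of the invariant slices $\{y_i=y_j\}$ of $\mathcal{C}$. Under the hypothesis of the lemma, after relabeling by the permutation $(i,j,k)$, the point $h(z)$ lies on such a slice (with the fully symmetric configuration $y_i=y_j=y_k$ as a degenerate sub-case), and by Point~3 the trajectory $\phi(t,h(z))$ remains on this slice for all future time. Thus it suffices to verify both bullets of Property~6 on a planar ODE derived from the last three equations of \eqref{XYEquations} together with the explicit Taylor expansion of $C$ supplied by Point~2.

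Write $p=y_i=y_j$ and $q=y_k$ with $|q|\le p$. The expansion of $C$ gives $x_i=x_j=pq-\tfrac{q^2}{2}+O(|y|^3)$ and $x_k=\tfrac{q^2}{2}+O(|y|^3)$, so substituting into \eqref{XYEquations} produces the planar system
\begin{align*}
p'=-\tfrac{1}{2}pq^2+O(|y|^4),\qquad q'=-2pq^2+\tfrac{3}{2}q^3+O(|y|^5).
\end{align*}
Dividing yields $dq/dp=4-3q/p+O(|y|)$, so the trajectory tracks the line through $(p_0,q_0)$ of slope approximately $4$ and reaches $\{q=0\}$ at some limit value $p=a\in(0,p_0]$ (positive because $|q_0|\le p_0<4p_0$), which is precisely the ``two identical, third zero'' limit from the first bullet. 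Near this asymptote, the leading behavior $q'\sim -2aq^2$ forces the decay $q(t)\sim(2at)^{-1}$, while $p'=O(q^2)$ is integrable so $p$ genuinely converges to $a$. Consequently $x_i=x_j\sim aq\sim 1/t$ and $x_k\sim q^2\sim 1/t^2$, which controls $\sum_m x_m(t)^2=O(1/t^2)$ uniformly in $t\ge 1$ and yields the quantitative decay required by the second bullet. The degenerate edge case $y_i=y_j=y_k=y$ is simpler: the $S_3$-symmetry of $C$ collapses the system to $y'=-\tfrac{1}{2}y^3+O(y^4)$, so $y\sim t^{-1/2}\to 0$ and the limit at the origin matches both conclusions trivially.

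The main obstacle is proving that these leading-order approximations are valid uniformly along the whole trajectory, and that the orbit never leaves $B_\delta(0)$ before its asymptotic regime takes over. I would handle this by choosing $\delta'\in(0,\delta)$ small enough that (a) the $C^3$ regularity of $C$ ensures the $O(|y|)$ perturbation of $dq/dp$ stays, say, within $1/2$ of the principal slope~$4$ throughout the orbit, so the qualitative endpoint picture $q\to 0$, $p\to a$ survives; (b) the upper bound $a\le p_0$ keeps the limit point inside $B_\delta(0)$; and (c) monotonicity of $p$ and $q^2$ prevents any escape from $B_\delta(0)$ along the way. A Gr\"onwall-type comparison between the true slice dynamics and the planar leading-order ODE, carried out backward from the explicit asymptotic $q\sim (2at)^{-1}$, then closes the argument and establishes Property~6.
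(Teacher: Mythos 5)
Your skeleton is the paper's: use the symmetry of $C$ to confine $\phi(t,h(z))$ to the invariant slice where the two largest $y$-coordinates agree, reduce to a planar system in $(p,q)=(\text{pair value},\text{odd value})$, show $p$ decreases to some $a\ge 0$ while $q\to 0$, and read off the $x$-decay from the expansion of $C$. The leading-order computation is correct. But there is a genuine gap in how you control the remainders, and it bites exactly in the regime where the conclusion must be extracted. From $C^3$ regularity alone you get $p'=-\tfrac12 pq^2+p\,O(|y|^3)$ and $q'=q\bigl(\tfrac32 q^2-2pq+O(|y|^3)\bigr)$ (note your $O(|y|^5)$ for $q'$ is already an overclaim), and the assertion $dq/dp=4-3q/p+O(|y|)$ fails once $q\ll |y|^{3/2}$ --- which necessarily happens, since $q\to 0$ while $|y|\to a>0$. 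In that regime the generic remainder dominates $-\tfrac12 pq^2$, so the sign of $p'$, the convergence of $p$, and the slope field are all uncontrolled; shrinking $\delta'$ cannot repair this because the breakdown is asymptotic in $t$, not a smallness issue at time $0$. The same defect undermines the decay estimate: from $x_k=\tfrac{q^2}{2}+O(|y|^3)$ alone one cannot conclude $x_k=O(1/t^2)$, since the cubic remainder need not vanish as $q\to0$ with $p\to a$.

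The missing ingredient is the structural fact that on the slice the center-manifold functions satisfy $C_{\mathrm{odd}}=q^2\bigl(\tfrac12+O(|y|)\bigr)$ and $C_{\mathrm{pair}}=q\bigl(p-\tfrac q2+O(|y|^2)\bigr)$, i.e. the remainders are divisible by $q^2$ and $q$ respectively. The paper obtains this by substituting the ansatz into the PDEs that $C_i,C_j$ satisfy on the slice and showing that $C_i/y_i^2$ and $C_j/y_i$ extend continuously to $y_i=0$. With that lemma, $p'=-\tfrac12 pq^2(1+O(|y|))$ is genuinely negative and integrable, $q'/q^2=\tfrac32 q-2p+O(|y|^2)<0$ (using $q\le p$), and your asymptotics ($q=O(1/t)$ when $a>0$; $q\sim t^{-1/2}$ with $p/q\to1$ when $a=0$; $\sum_m x_m^2=O(1/t^2)$ in either case) go through essentially as in the paper's $\omega$-limit argument. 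Two smaller points: your dichotomy ``$q_0<p_0\Rightarrow a>0$ versus fully symmetric'' does not exhaust the $a=0$ possibilities (the paper treats $a>0$ and $a=0$ without deciding which occurs), and your phase-plane picture breaks for $q_0<0$ (then $q'<0$ drives $q$ away from $0$ and the orbit never reaches $\{q=0\}$), so nonnegativity of the odd coordinate must be assumed or argued.
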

\begin{proof}
If $y_j=y_k$ for our center manifold solution, then properties 2 and 3 of Theorem \ref{descriptioncenter} (properties we have already established) imply that $x_j=x_k$ initially, and that these two equalities are preserved for the whole trajectory, i.e., we are looking at a $U(2)$-invariant soliton given by the equations 
\begin{align}\label{XYij}
\begin{split}
\frac{dX_i}{ds}&=\frac{Y_i^2}{2}-X_i+X_i(X_i^2+2X_j^2),\\
\frac{dX_j}{ds}&=Y_iY_j-\frac{Y_i^2}{2}-X_j+X_j(X_i^2+2X_j^2),\\
\frac{d Y_i}{ds}&=Y_i(X_i-2X_j+X_i^2+2X_j^2),\\
\frac{d Y_j}{ds}&=Y_j(-X_i+X_i^2+2X_j^2).
\end{split}
\end{align}By restricting our three-dimensional center manifold $\mathcal{C}\subset \mathbb{R}^6$ to the two-dimensional submanifold given by $y_j=y_k$, we find two $C^3$ functions $C_i,C_j$ of $y_i,y_j$, vanishing to first order, which satisfy the following PDEs:
\begin{align*}
C_i(C_i^2+2C_j^2-1)+\frac{y_i^2}{2}&=\frac{\partial C_i}{\partial y_i}y_i (C_i-2C_j+C_i^2+2C_j^2)+\frac{\partial C_i}{\partial y_j}y_j(-C_i+C_i^2+2C_j^2);\\
C_j(C_i^2+2C_j^2-1)+y_iy_j-\frac{y_i^2}{2}&=\frac{\partial C_j}{\partial y_i}y_i (C_i-2C_j+C_i^2+2C_j^2)+\frac{\partial C_j}{\partial y_j}y_j(-C_i+C_i^2+2C_j^2).
\end{align*}
These equations imply that $\frac{C_i}{y_i}$ and $\frac{C_j}{y_i}$ can be continuously extended to functions in a neighbourhood of $(0,0)$, which both vanish at this same point. In fact, we can then conclude that the function $\frac{C_i}{y_i^2}$ can be extended to a continuous function on a neighbourhood of $(0,0)$ satisfying $\frac{C_i}{y_i^2}=\frac{1}{2}+O(\sqrt{y_i^2+y_j^2})$, and that $\frac{C_j}{y_i}=y_j-\frac{y_i}{2}+O\left(y_i^2+y_j^2\right)$. Therefore, for small enough $y_i^2+y_j^2$, the quantity $-C_i+C_i^2+2C_j^2$ is non-negative, and is zero if and only if $y_i=0$.

We now have enough information to study the center manifold dynamics. Since $Y_j\ge Y_i$ initially, this is true for all times. Using \eqref{XYEquationsbi} and the $C_i,C_j$ functions, we can write
\begin{align*}
\frac{d Y_i}{ds}=Y_i(C_i-2C_j+C_i^2+2C_j^2), \qquad \frac{d Y_j}{ds}=Y_j(-C_i+C_i^2+2C_j^2)
\end{align*}
for trajectories on the center manifold which lie in $B_{\epsilon}(0)$. The fact that $Y_j\ge Y_i$, and $Y_j$ is decreasing implies that solutions which start close to $(0,0)$ will stay that way for all future times. Therefore, the invariant $\omega$-limit set exists and is connected. In fact, this limit set must consist of a single point $(0,a)$ for some small $a\ge 0$, so the trajectory converges to this point. It is clear that $a$ can be made as small as desired by having our solution start close to $(0,0)$. Now, if $a>0$, we compute 
\begin{align*}
\lim_{s\to \infty}\frac{Y_i'}{Y_i^2}=-2a,
\end{align*}
so that $Y_i$ converges to $0$ like $\frac{1}{s}$. By our estimates on $C_i,C_j$, we then conclude that $X_i^2+X_j^2$ converges to $0$ like $\frac{1}{s^2}$. If $a=0$, then we compute 
\begin{align*}
\frac{Y_i'}{Y_i^3}=\frac{3}{2}-\frac{Y_j}{Y_i}+K\le -\frac{1}{2}+K,
\end{align*}
where $K$ is small for $y_i^2+y_j^2$ small. Thus, $Y_i$ converges to $0$ at least as fast as $\frac{1}{\sqrt{s}}$. Then the observation 
\begin{align*}
\frac{Y_i'}{Y_i^2}=\frac{3 Y_i}{2}-Y_j+O(Y_i^2+Y_j^2), \qquad \frac{Y_j'}{Y_i^2}=-\frac{Y_j}{2}+O(Y_i^2+Y_j^2)
\end{align*}
and linear stablity analysis implies that $\frac{Y_i}{Y_j}$ converges to $1$, so $Y_j$ also vanishes like $\frac{1}{\sqrt{s}}$, so $X_i^2+X_j^2$ converges to $0$ like $\frac{1}{s^2}$, as required. 
\end{proof}

\end{document}